\begin{document}
\title[A Dynamic Routing Framework for Shared Mobility Services]{A Dynamic Routing Framework for Shared Mobility Services}  

\thanks{This work was supported by the Ford-MIT Alliance.}

\thanks{Corresponding author: Yue Guan (\href{mailto:guany@mit.edu}{guany@mit.edu})}

\author{Yue Guan}
\affiliation{%
  \institution{Massachusetts Institute of Technology}
  \streetaddress{77 Massachusetts Avenue}
  \city{Cambridge}
  \state{MA}
  \postcode{02139}
  \country{USA}}
\email{guany@mit.edu}
  \state{MA}

\author{Anuradha M. Annaswamy}
\affiliation{%
  \institution{Massachusetts Institute of Technology}
  \streetaddress{77 Massachusetts Avenue}
  \city{Cambridge}
  \state{MA}
  \postcode{02139}
  \country{USA}}
  \email{aanna@mit.edu}

\author{H. Eric Tseng} 
\affiliation{%
  \institution{Ford Motor Company}
  \streetaddress{1 American Road}
  \city{Dearborn} 
  \state{MI}
  \postcode{48126}
  \country{USA}}
\email{htseng@ford.com}


\begin{abstract}
Travel time in urban centers is a significant contributor to the quality of living of its citizens. Mobility on Demand (MoD) services such as Uber and Lyft have revolutionized the transportation infrastructure, enabling new solutions for passengers. Shared MoD services have shown that a continuum of solutions can be provided between the traditional private transport for an individual and the public mass transit based transport, by making use of the underlying cyber-physical substrate that provides advanced, distributed, and networked computational and communicational support. In this paper, we propose a novel shared mobility service using a dynamic framework. This framework generates a dynamic route for multi-passenger transport, optimized to reduce time costs for both the shuttle and the passengers and is designed using a new concept of a space window. This concept introduces a degree of freedom that helps reduce the cost of the system involved in designing the optimal route. A specific algorithm based on the Alternating Minimization approach is proposed. Its analytical properties are characterized. Detailed computational experiments are carried out to demonstrate the advantages of the proposed approach and are shown to result in an order of magnitude improvement in the computational efficiency with minimal optimality gap when compared to a standard Mixed Integer Quadratically Constrained Programming based algorithm.
\end{abstract}

%
%

\begin{CCSXML}
	<ccs2012>
	<concept>
	<concept_id>10010405.10010481.10010485</concept_id>
	<concept_desc>Applied computing~Transportation</concept_desc>
	<concept_significance>500</concept_significance>
	</concept>
	<concept>
	<concept_id>10002950.10003624.10003633.10003640</concept_id>
	<concept_desc>Mathematics of computing~Paths and connectivity problems</concept_desc>
	<concept_significance>300</concept_significance>
	</concept>
	<concept>
	<concept_id>10010147.10010341.10010342.10010343</concept_id>
	<concept_desc>Computing methodologies~Modeling methodologies</concept_desc>
	<concept_significance>300</concept_significance>
	</concept>
	</ccs2012>
\end{CCSXML}

\ccsdesc[300]{Applied Computing~Transportation}
\ccsdesc[300]{Mathematics of Computing~Paths and Connectivity Problems}
\ccsdesc[300]{Computing Methodologies~Modeling Methodologies}


%
%

\keywords{Smart Cities, Shared Mobility, Mobility on Demand, Dynamic Routing, Alternating Minimization, Mixed Integer Quadratically Constrained Programming, Space Window, Clustering}

\maketitle

\renewcommand{\shortauthors}{Y. Guan et al.}

\makenomenclature

\setlength{\nomlabelwidth}{3cm}
 
 
\nomenclature{$n$}{numbers of passengers}
\nomenclature{$N$}{numbers of clusters}
\nomenclature{$i, j, k$}{indexes for ease of notation, $i, j$ for clusters, and $k$ for passengers}
\nomenclature{$Z_1, Z_2, Z_3, Z_4$}{sets defined for ease of notation on pages 5, 6, and 10}
\nomenclature{$P_k, D_k$}{requested pickup and drop-off locations of passenger $k$, respectively}
\nomenclature{$r_k^p, r_k^d$}{requested maximum walking distances before being picked up, and after being dropped off of passenger $k$, respectively}
\nomenclature{$T_k^r$}{time of request of passenger $k$}
\nomenclature{$d(\cdot, \cdot)$}{distance metric}
\nomenclature{$S_W(P_k, r_k^p)$}{space window of locations with distances from $P_k$ not exceeding $r_k^p$}
\nomenclature{$E_k^p, E_k^d$}{events of passenger $k$ being picked up and dropped off, respectively}
\nomenclature{$C_i$}{$i$th cluster}
\nomenclature{$A_i$}{area to visit of cluster $C_i$}
\nomenclature{$R_i$}{routing point of cluster $C_i$}
\nomenclature{$\mathcal{C}$}{objective function}
\nomenclature{$\gamma_1, \gamma_2$}{weights of travel time cost terms defined in the objective function}
\nomenclature{$\alpha_1, \alpha_2, \alpha_3^p, \alpha_3^d$}{weights of travel time cost terms defined in the objective function}
\nomenclature{$t_i$}{travel time of the shuttle on the $i$th trip segment}
\nomenclature{$WaitT_k$}{waiting time of passenger $k$}
\nomenclature{$RideT_k$}{riding time of passenger $k$}
\nomenclature{$Walk_k^p, Walk_k^d$}{walking times before being picked up, and after being dropped off of passenger $k$, respectively}
\nomenclature{$v_s, a, t_s, t_a$}{constants of the shuttle travel mode}
\nomenclature{$v_p$}{walking speed of the passengers}
\nomenclature{$p_k, d_k$}{orders of passenger $k$ in the pickup and drop-off queues, respectively}
\nomenclature{$MPS^p, MPS^d$}{upper bounds on the pickup and drop-off position shifts, respectively}
\nomenclature{$c$}{capacity of the shuttle}
\nomenclature{$O$}{origin of the shuttle}
\nomenclature{$S, S_f$}{sequence of the clusters, and the feasible domain of $S$, respectively}
\nomenclature{$R, R_f$}{routing points of the clusters, and the feasible domain of $R$, respectively}
\nomenclature{$x_{ij}$}{indicator that whether the shuttle travels from $A_i$ to $A_j$}
\nomenclature{$t_{ij}$}{travel time of the shuttle from $R_i$ to $R_j$}
\nomenclature{$T_i$}{departure time of the shuttle from $R_i$}
\nomenclature{$q_i$}{number of passengers on board after the shuttle departs from $R_i$}
\nomenclature{$f_i^p, f_i^d$}{numbers of finished pickups and drop-offs before the shuttle visits $A_i$, respectively}
\nomenclature{$l_i^p, l_i^d, l_i$}{numbers of pickups, drop-offs, and net loads of $C_i$}
\nomenclature{$w_k^p, w_k^d$}{walking times before being picked up, and after being dropped off of passenger $k$, respectively}
\nomenclature{$cl_k^p, cl_k^d$}{indexes of clusters containing $E_k^p$ and $E_k^d$, respectively}
\nomenclature{$R_{ij}^r, R_k^{s, p}, R_k^{s, d}, R_i^r$}{explaining decision variables for convex relaxation via SOC constraints defined in Equations (\ref{eqn:rotationmiqcp}), (\ref{eqn:shiftmiqcp}), (\ref{eqn:qcqprev}), and (\ref{eqn:qcqpsevd})}
\nomenclature{$T_{ij}, T_c$}{intermediate variables defined to linearize constraint (\ref{eqn:conmiqcp_6}) to (\ref{eqn:subtourmiqcp})}
\nomenclature{$Q_{ij}, Q_c$}{intermediate variables defined to linearize constraint (\ref{eqn:conmiqcp_9}) to (\ref{eqn:loadmiqcp})}
\nomenclature{$F_{ij}^p, F_{ij}^d, F_c$}{intermediate variables defined to linearize constraints (\ref{eqn:conmiqcp_10}) to (\ref{eqn:pickmiqcp}), and (\ref{eqn:conmiqcp_11}) to (\ref{eqn:dropmiqcp})}
\nomenclature{$S^h, R^h$}{$S$ and $R$ after the $h$th iteration defined in the AltMin algorithm, respectively}
\nomenclature{$g_i^1, g_i^2$}{numbers of passengers not picked up, and on board when the shuttle is travelling on the $i$th trip segment, respectively}
\nomenclature{$L, L'$}{indexes of the current and next possible clusters that the shuttle serves defined in the modified BHK algorithm, respectively}
\nomenclature{$t(L, L')$}{time taken for the shuttle to travel from $A_L$ to $A_{L'}$ defined in the modified BHK algorithm}
\nomenclature{$h$}{index of iteration defined in the AltMin algorithm}
\nomenclature{$h_{\text{max}}$}{upper bound on the number of iterations defined in the AltMin algorithm}
\nomenclature{$\bar{h}$}{number of iterations before termination defined in the AltMin algorithm}
\nomenclature{$h_0, h_0'$}{indexes of iterations between which the AltMin algorithm recurs, $h_0'<h_0$}
\nomenclature{$h^*$}{index of the iteration corresponding to the optimal cost defined in the AltMin algorithm} 
\nomenclature{$e_i, e_i'$}{indicators that whether $A_i$ has been visited at the current and possible next states defined in the modified BHK algorithm, respectively} 
\nomenclature{$\mathcal{E}$}{entropy of the state defined in the modified BHK algorithm} 
\nomenclature{$V$}{value function of the state defined in the modified BHK algorithm} 
\nomenclature{$M$}{multiplier of $t(L, L')$ contributing to $\mathcal{C}$ defined in the modified BHK algorithm} 
\nomenclature{$C^*, S^*, R^*$}{optimal cost, corresponding sequence, and routing points, respectively} 
\nomenclature{$\mathcal C_{\text{AltMin}}^\ast$}{optimal cost derived via the AltMin algorithm} 
\nomenclature{$\mathcal C_{\text{MIQCP}}^\ast$}{optimal cost derived via the MIQCP based approach} 

\printnomenclature
 

\section{Introduction}
\label{sec:intro}

Mobility of people and goods has been critical to urban life ever since cities emerged thousands of years ago. Over 1 billion vehicles travel on the roads today, and that number is projected to double by 2,050 \cite{ibm2015building}. New approaches and solutions are required to solve, or at least improve the quality of urban mobility, both on highways and on city streets. Until recently, available solutions for urban transportation have been clearly binary, with the first option represented by public transportation that provides low cost solutions and reduced carbon footprint per traveler at increased times of walking, waiting, and traveling, and the second represented by individual private automobiles that reduce individuals' times but with a significantly high cost and increased carbon footprint. With the ushering in Cyber-Physical Systems enabled by the development of smart mobile devices as well as affordable, accessible and powerful computing resources, a novel mode of urban mobility named Mobility on Demand (MoD) services has been revolutionizing the ground transportation infrastructure in urban centers \cite{ambrosino2004demand, mitchell2008mobility,chong2013autonomy}. Compelling statistics such as the idle rate of vehicles in the United States and United Kingdom being at 95\% \cite{reinventingparking} has enabled the emergence of several MoD companies worldwide, such as Uber, Lyft and Didi Chuxing, thereby enabling new solutions for passengers, with flexibility in travel time and cost. 

MoD solutions of late have started to provide several alternatives other than conventional transportation options, e.g., buses, subways and taxis, for passengers to move around. These correspond to different combinations of travel time, cost, and carbon footprint per traveler there by introduce different and attractive alternatives to passengers \cite{martin2011greenhouse, yu2017environmental}. A recent article \cite{alonso2017demand} illustrates clearly that the potential of shared mobility in drastically reducing the number of vehicles on the roads is very high. The article demonstrates that if ride sharing is implemented in New York City, 98\% of the taxi demands could be satisfied by 3,000 four-passenger cars, which is fewer than 25\% of the number of taxis currently operating in New York City with a waiting time of 2.7 minutes on average. Furthermore, over 95\% of the trips are feasible to be shared with other trips given a maximum 5-minute travel delay \cite{santi2014quantifying}. Similar empirical laws appear to be applicable in other cities as well, suggesting that the potential of shared mobility solutions is a global one \cite{tachet2017scaling}. This suggests that customized shared MoD solutions that involve multi-passenger transport can provide an ideal combination of convenience, flexibility, and affordability to passengers (Fig. \ref{fig:spectrum}) \cite{kampe, forddynamicshuttle, annaswamy2018transactive}. In this paper, we present one such shared MoD solution that uses a dynamic routing framework for operating a multi-passenger shuttle.
\begin{figure}[h!]
	\centering
	\includegraphics[width=0.75\textwidth]{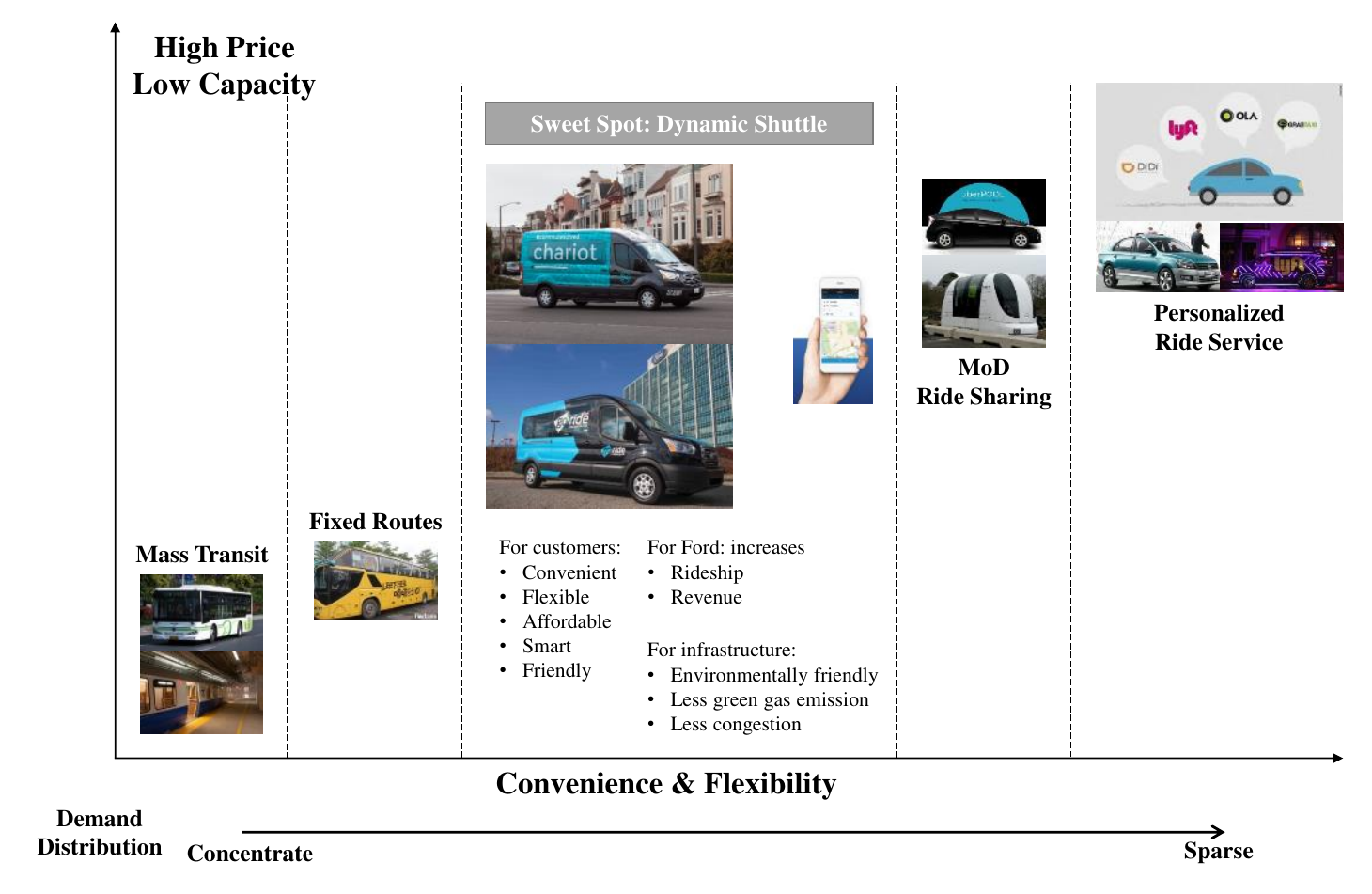}
	\caption{Spectrum of mobility services.}
	\label{fig:spectrum}
\end{figure}

The underlying goal in a shared mobility solution is the reduction of Price of Anarchy (PoA), which can be loosely defined as the cost ratio of all agents acting in a decentralized manner and that due to a completely centralized infrastructure \cite{youn2008price}. Our thesis is that our proposed shared MoD service can achieve this reduction by employing a dynamic framework, which responds to real time travel needs of passengers and incentivized so as to ensure their subscriptions to these routes. The specific dynamic framework that we propose is a dynamic shuttle service that responds to real time requests from passengers, determines a route that is customized to their requested pickup/drop-off locations, which optimizes a time cost that includes the total travel time of the shuttles as well as time costs incurred by the passengers. While this framework is also capable of incorporating dynamic prices for the shuttle rides \cite{annaswamy2018transactive}, we focus in this paper only on the dynamic routing of the shuttle. 

Dynamic routing is a main feature of ride sharing services. Ride sharing has been the subject of study in many papers including \cite{spieser2014toward, atasoy2015concept, santi2014quantifying, lioris2016optimised, alonso2017demand, tachet2017scaling}. Ref. \cite{spieser2014toward} is a macroscopic study of the Singapore mobility scene and estimates the financial benefits of ride sharing. References \cite{santi2014quantifying, alonso2017demand} focus on the simulation of actual implementations of ride sharing, with the latter providing a more elaborate case study. Ref. \cite{lioris2016optimised} discusses ride sharing using a case study of Paris and the underlying ride sharing strategies. Ref. \cite{zhang2015feeder} proposes a transit service to tackle the last-mile problem and validates its capability of reducing last-mile distances and travel times via large-scale simulation based upon real data. What makes our paper distinct and novel compared to these other works is the use of a new concept that we have introduced, termed as \textit{space window}. While the concept of dynamic routing is considered in many of these papers, none of them considers the added flexibility and efficiency in routing provided by space windows.

The main idea of a space window is that the passengers are willing to walk for a certain distance, both before being picked up and after being dropped off in order to get the dynamic shuttle service. Passengers specify the maximum distances they are willing to walk and therefore the requested pickup/drop-off locations expand to pickup/drop-off regions denoted as space windows $S_W$, which are defined as the sets of all feasible locations that passengers are willing to walk to/from. Space windows are one of the key ingredients of the proposed dynamic shuttle service, which can therefore be viewed as a semi door-to-door mode (Fig. \ref{fig:semi}). Our thesis is that the introduction of $S_W$ provides greater flexibility and an important degree of freedom in reducing PoA. Moreover, as will be seen in the sections that follow, the introduction of $S_W$ allows a clustering of different pickup/drop-off events which is able to reduce the number of shuttle stops and in turn leads to a significant reduction in the time cost compared to a traditional MoD service. Further benefits are due to real street topologies and traffic congestion and the associated asymmetries between driving time and walking time, such as one-ways and pedestrian only lanes as a short walking distance for a passenger may result in significant improvements in the shuttle route. Such an increase in the quality of service has the potential to provide additional positive externalities on the customer riding experiences as well. An elaborate comparison of dynamic routing with space windows versus that without space windows has been conducted in \cite{annaswamy2018transactive} using real operational data, demonstrating an average improvement of 30\% reduction on travel time costs and 50\% reduction on the number of shuttle stops. However, space windows also introduces penalties in terms of extra walking times for passengers, leading to a trade-off in the shuttle service design. The maximum distance willing to walk is not only the protocol between a passenger and the shuttle server when negotiating a ride, but also a key tuning parameter to adjust the trade-off. One should also accommodate the fact that different customers may have different preferences in terms of how far they are willing to walk, or the same passenger may have different preferences depending on various factors including weather conditions, seasons, and times of day.
\begin{figure}[h!]
	\centering
	\includegraphics[width=0.75\textwidth]{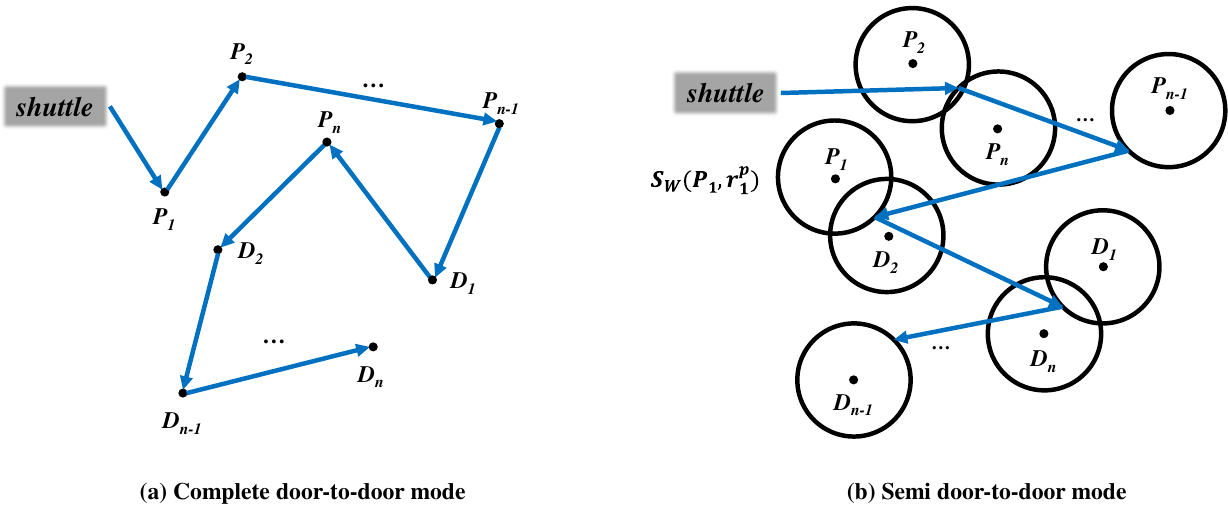}
	\caption{Complete door-to-door mode versus semi door-to-door mode.}
	\label{fig:semi}
\end{figure}

With the above concept of a space window, the overall shared mobility service is proposed to operate in four steps: 1) \textit{Request}: passengers request shuttle rides with specified pickup/drop-off locations, maximum distances willing to walk, and time windows of service if needed. 2) \textit{Offer}: the shuttle server distributes offers to passengers with ride details including pickup locations, walking distances, pickup times, drop-off locations, drop-off times, and prices. 3) \textit{Decide}: passengers decide whether to accept or decline the offers. 4) \textit{Operate}: the shuttle server sends out operational instructions to the shuttles according to the decisions from the passengers and trips start. 
\begin{figure}[h!]
	\centering
	\includegraphics[width=0.75\textwidth]{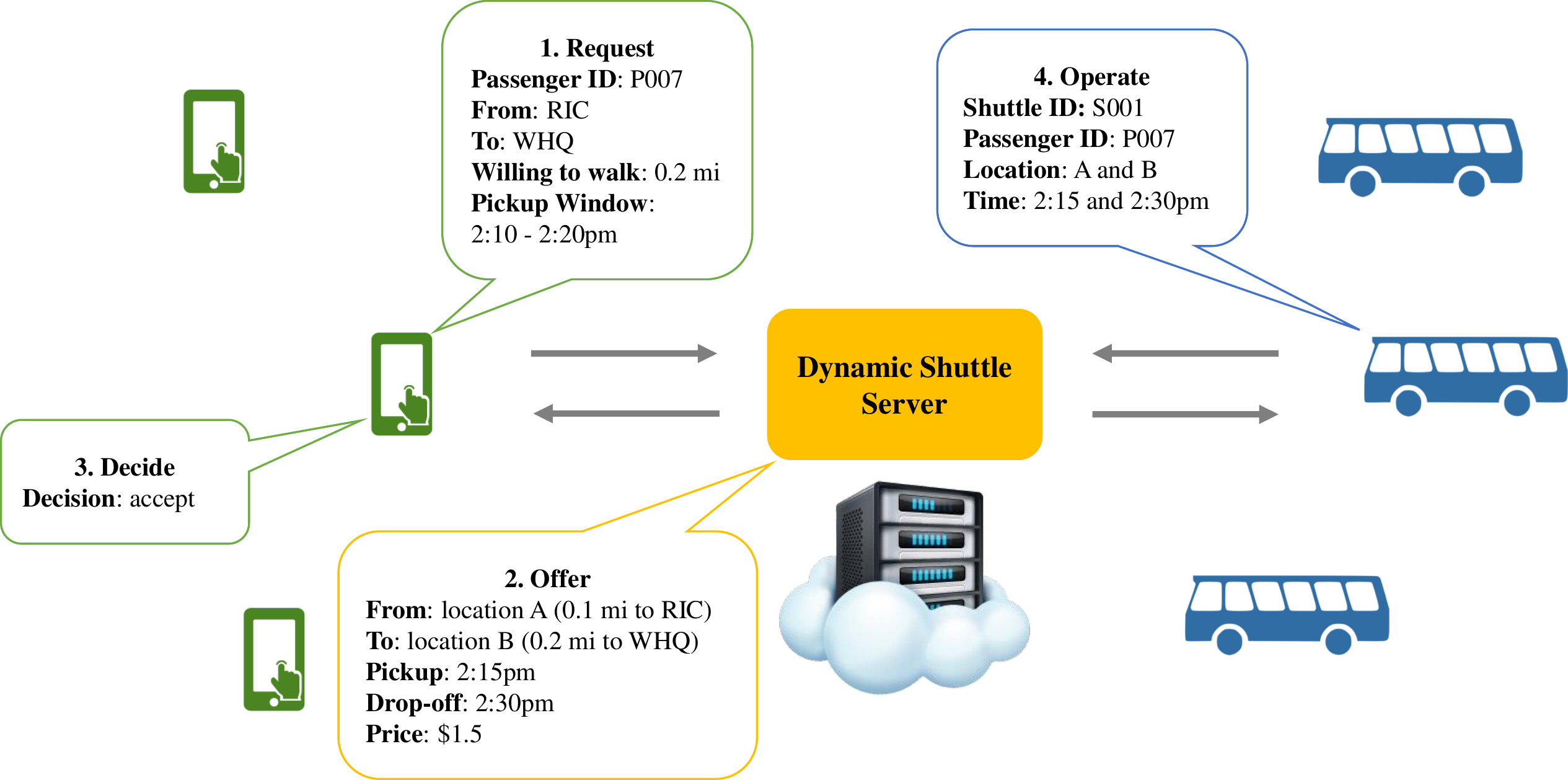}
	\caption{Procedures of the dynamic shuttle service: request, offer, decide, and operate.}
	\label{fig:procedure}
\end{figure}

The introduction of space windows however introduces significant challenges in designing an analytically tractable optimization framework. The overall problem of conventional dynamic routing with complete door-to-door mode, often referred to as a Dial-a-Ride Problem (DARP) has been studied extensively in the literatures. The underlying problem is essentially a Traveling Salesman Problem (TSP). The concept of space windows introduces extra complexity to the optimization framework, where dynamic routing with the therefore semi door-to-door mode is analogous to a TSP between regions as opposed to points. Though the overall problem can be formulated as a standard Mixed Integer Programming (MIP), it is extremely computationally burdensome. In this paper, we propose the use of an Alternating Minimization \cite{ortega2000iterative, auslender1976optimisation, luo1993error} based optimization approach in order to tackle the problem. In addition, we use this approach to carry out dynamic routing in response to real time requests from passengers, thereby facilitates an overall dynamic framework. This framework not only includes dynamic routing, which is the focus of this paper, but also dynamic pricing \cite{annaswamy2018transactive}.

In summary, the main contributions of this paper are the following: i) a novel dynamic routing framework for shared mobility services which utilizes multi-passenger shuttles, built upon a central concept termed as space window; ii) a constrained optimization formulation that shows how the space window concept can be utilized to carry out dynamic routing of shuttles, delineated by first carrying out static routing, for ease of exposition, and then the dynamic case; iii) an Alternating Minimization (AltMin) algorithm to solve the constrained optimization problem, benchmarked against a standard Mixed Integer Quadratically Constrained Programming (MIQCP) based approach; iv) quantifications of the advantages of the proposed AltMin algorithm using real operational data. As will be shown in the sections that follow, the proposed framework is able to reduce the number of shuttle stops and further save travel times for both the shuttle and the passengers. The AltMin algorithm is demonstrated to achieve an order of magnitude improvement in the computational efficiency and preserves optimality compared to the MIQCP based approach in various scenarios.

The overall organization of the paper is as follows. We formally state the problem of a shared mobility service with space window under a static routing setup for a single shuttle case in Section \ref{sec:form}, using a constrained optimization formulation. Before proceeding to detail the AltMin algorithm we propose to solve static routing, we describe the standard MIQCP formulation in Section \ref{sec:miqcp}, which will be used for the benchmark. In Section \ref{sec:altmin}, we describe the AltMin algorithm. Detailed computational experiments are carried out in Section \ref{sec:experiments}, using real operational data from \cite{fordchariot, kampe}, to evaluate the performances of the AltMin algorithm, and compare closely with the MIQCP based approach. Extensions to dynamic routing using its static counterpart as a subcomponent are discussed in Section \ref{sec:dynamic}. A summary and concluding remarks are provided in Section \ref{sec:conclusions}.

\section{Problem Formulation}
\label{sec:form}

In this section, we formulate the problem that we propose to address in the context of shared mobility services. The goal is to generate a route for a multi-passenger shuttle so as to optimize the underlying time costs. In this section we restrict our attention to static routing, where the assumption is that all ride requests have arrived prior to the departure of the shuttle, and once the trip starts, no further requests will be accepted until the current rides are completed. The overall goal of the shared mobility service is the generation of a static route using the four steps described in Fig. \ref{fig:procedure} in Section \ref{sec:intro}, which will be shown in this section, to be in the form of a constrained optimization problem. The static route that is proposed by the server is determined so as to optimize various time costs that are incurred both by the shuttle as well as by the passengers. In the former case, we include the shuttle travel time, and in the latter case, we include the walking times, waiting times, and ride times incurred by the passengers. Before we proceed to describe the algorithms, we first present a few preliminaries in Section \ref{subsec:pre}. In Section \ref{subsec:objfun}, we formally state the objective function, and in Section \ref{subsec:cons}, we delineate various constraints. The overall \textit{master problem} is then stated formally in Section \ref{subsec:master}.

\subsection{Preliminaries}
\label{subsec:pre}

\begin{definition}[Ride Request]
	A \textit{ride request} of the dynamic shuttle service is defined as a tuple $\{P_k, D_k, r_k^p, r_k^d, T_k^r\}$, composed of $P_k$ the customer specified pickup location, $D_k$ drop-off location, $r_k^p$ and $r_k^d$ the maximum distances willing to walk before being picked up and after being dropped off, respectively, as well as $T_k^r$ the time of request. $P_k, D_k \in {\mathbb R}^2$ (e.g., longitude and latitude), $r_k^p, r_k^d \in \mathbb R_{\geq 0}$, and $T_k^r \in \mathbb R$. Let $Z_1= \{ 1, 2, \dots, n \}$, where $n \in \mathbb Z_{> 0}$ is the number of requests. $k \in Z_1$ denotes the order of the request according to the time stamp when it is made, i.e., a smaller $k$ value corresponds to an earlier request. Without loss of generality, we assume there is only one passenger per request, and all the requests are made at time $T_k^r=0, \forall k \in Z_1$. 
\end{definition}

\begin{definition}[Space Window]
	A pickup/drop-off \textit{space window} is defined as the set of all feasible pickup/drop-off locations whose distance from the requested pickup/drop-off location does not exceed the specified maximum walking distance. For each $k \in Z_1$, denote $S_W(P_k,r_k^p), S_W(D_k,r_k^d) \in {\mathbb R}^2$ as the pickup and drop-off space window for the $k$th passenger, respectively, such that 
	\begin{equation}
	\label{eqn:spacewindowpick}
	S_W(P_k,r_k^p)=\{x \, | \, x \in {\mathbb R}^2, d(x, P_k) \leq r_k^p\}
	\end{equation}
	\begin{equation}
	\label{eqn:spacewindowdrop}
	S_W(D_k,r_k^d)=\{x \, | \, x \in {\mathbb R}^2, d(x, D_k) \leq r_k^d\} 
	\end{equation}
	where $d$ is the distance metric on ${\mathbb R}^2$.
\end{definition}
Let $E_k^p$ and $E_k^d$ denote the events that correspond to the $k$th passenger being picked up and dropped off, respectively. If during the same trip, the space windows of different pickup/drop-off events share a common intersection, the corresponding events are assumed eligible to be grouped. In such a case, the shuttle is allowed to visit the intersection to take care of all the corresponding pickup/drop-off tasks simultaneously, instead of visiting individual space windows and serving the tasks one by one. 

\begin{definition}[Cluster]
A \textit{cluster} $C_i$ is a nonempty set of pickup and/or drop-off events if the corresponding space windows share a common intersection, where $i \in \mathbb Z_{>0}$ is the index of the cluster. 
\end{definition}
It should be noted that the existence of a shared intersection is necessary for clustering, but not sufficient. This will be discussed in Section \ref{subsec:master}.

\begin{definition}[Clustering Pattern]
	A \textit{clustering pattern} is a set of mutually exclusive clusters $\{C_1, C_2, \dots, C_N\}$, whose union constitutes the set of all pickup and drop-off events, where $N \in \mathbb Z_{> 0}$ denotes the number of clusters.
\end{definition}
Let $Z_2=\{1, 2, \dots, N\}$. By definition, $\bigcup_{i \in Z_2} C_i = \{E_k^p, E_k^d, \forall k \in Z_1 \}$. Also, $C_i \cap C_j = \emptyset, \forall i, j \in Z_2, i \neq j$. $N=2n$ corresponds to the case where no nontrivial clusters are formed, that is, each pickup/drop-off event itself is a cluster. Typically, $N<2n$ because the likelihood that different events are eligible to be clustered is fairly high for peak travel conditions in urban areas. It should be noted that some clusters may only contain pickups or drop-offs, and some may contain both.

\begin{definition}[Area]
	An \textit{area} $A_i, \forall i \in Z_2$ is defined as a subset of ${\mathbb R}^2$ that represents the feasible area to visit associated with the cluster $C_i$.
\end{definition}
It follows that $A_i$ is the intersection of the corresponding space windows of the events contained in $C_i$, and is given by
\begin{equation}
\label{eqn:areadef}
A_i=\{x \, | \, x \in S_W(P_{k_p}, r_{k_p}^p) \cap S_W(D_{k_d}, r_{k_d}^d), \forall E_{k_p}^p, E_{k_d}^d \in C_i \}
\end{equation}

\begin{definition}[Routing Point]
A \textit{routing point} $R_i \in A_i, \forall i \in Z_2$ is defined as the actual location that the shuttle visits in area $A_i$ to pick up and/or drop off the corresponding passengers.
\end{definition}

For the sake of consistency, throughout this manuscript, $Z_1$ and $Z_2$ will be used as the sets which contain the indexes of the passengers (or requests) and clusters, respectively. Moreover, subindex $i, j \in Z_2$ are associated with clusters, and subindex $k \in Z_1$ is associated with passengers.

The routing task of the dynamic shuttle service is essentially to design a route in order to serve a given set of ride requests, which minimizes the cost of the shuttle-passenger system and satisfies certain service constraints, which are defined as follows.

\subsection{Objective Function}
\label{subsec:objfun}

The \text{objective function} of the system is defined as follows
\begin{equation}
\label{eqn:objfundef}
\mathcal C=\gamma_1 \sum_{i=1}^{N} t_i + \gamma_2 \sum_{k=1}^{n} (\alpha_1 WaitT_k + \alpha_2 RideT_k + \alpha_3^p WalkT_k^p + \alpha_3^d WalkT_k^d)
\end{equation}
which represents a weighted sum of various time costs, both on the shuttle side and on the passenger side. In (\ref{eqn:objfundef}), $\forall i \in Z_2$, $t_i$ denotes the time taken by the shuttle to complete the $i$th trip segment, $\forall k \in Z_1$, $WaitT_k, RideT_k, WalkT_k^p$ and $WalkT_k^d$ denote the waiting time, riding time, walking time before being picked up and after being dropped off of passenger $k$, respectively, and $\gamma_1, \gamma_2, \alpha_1, \alpha_2, \alpha_3^p$ and $\alpha_3^d$ are nonnegative weights.

It should be noted that the objective function defined in (\ref{eqn:objfundef}) is a general representation which does not depend on any specific distance metric, i.e., versatile for various distance metrics as long as represented properly. Throughout this manuscript, the underlying space topology is assumed to be a two dimensional Euclidean space and as a result the corresponding space windows $S_w(P_k, r_k^p)/S_w(D_k, r_k^d)$ are circles centered at $P_k/D_k$ with $r_k^p/r_k^d$ as the radii, $\forall k \in Z_1$. A similar formulation can be extended to other metrics as well, such as the Manhattan distance and metrics that correspond to real street topologies.

As has been briefly discussed in Section \ref{sec:intro}, the reduction on $N$ has potential to facilitate the reduction on $\mathcal C$, especially when considering the actual travel profile of the shuttle. For each $i \in Z_2$, the term $t_i$ includes both the actual travel time on the $i$th trip segment and the service time at the stop. The travel time is calculated assuming a constant shuttle velocity $v_s$. However in reality, the shuttle accelerates and decelerates between stops. Therefore an extra amount of time $\frac{v_s}{2a}$, which is a reasonable approximation of the average additional time taken considering varying velocity profile, is added for correction. $a$ denotes the shuttle acceleration. Moreover, the shuttle stops at each station for a service time $t_s$ in order to serve the passengers. Hence $t_i$ is calculated as the sum of the travel time using constant velocity, and augmented with an additional time $t_a$ given by 
\begin{equation}
	\label{eqn:travelmodel}
	t_a=t_s+\frac{v_s}{2a}
\end{equation}
Therefore our formulation applies implicit penalty on $N$, as $t_a$ accumulates when the shuttle travels along the route. 

\subsection{Constraints}
	\label{subsec:cons}

The route of the dynamic shuttle service should satisfy the following \text{constraints}.

\begin{definition}[Legitimate Constraint]
	\label{def:legitimate}
	\textit{Legitimate constraints} specify that any passenger should be picked up before being dropped off, that being said, $\forall k \in Z_1$, $E_k^p$ should occur before $E_k^d$. Moreover, $E_k^P$ and $E_k^d$ cannot be in the same cluster as otherwise passenger $k$ would be willing to walk directly from the requested origin to the destination instead of taking a shuttle ride.
\end{definition}

\begin{definition}[Capacity Constraint]
	\label{def:capacity}
	\textit{Capacity constraints} specify that at anytime of the trip, the number of passengers on board should not exceed the capacity of the shuttle denoted as $c \in \mathbb Z_{>0}$. 
\end{definition}

It should be noted that the number of passengers within a trip could be larger than $c$, as long as they are not on board simultaneously at anytime during the trip.

\begin{definition}[Maximum Position Shift Constraint]
\label{def:mps}
A pickup/drop-off \textit{position shift} is defined as the absolute value of the difference between the actual occurrence of the pickup/drop-off event and that of the request event. \textit{Maximum Position Shift (MPS) constraints} specify that the position shifts should not exceed some given upper bounds, that being said
\begin{equation}
\label{eqn:mpsp}
|p_k-k| \leq MPS^p, \quad \forall k \in Z_1
\end{equation}
\begin{equation}
\label{eqn:mpsd}
|d_k-k| \leq MPS^d, \quad \forall k \in Z_1
\end{equation}
where $p_k$ and $d_k$ denote the orders of the $k$th passenger in the pickup and drop-off queue, respectively. $MPS^p$ and $MPS^d$ are positive integer upper bounds on the pickup and drop-off position shifts, respectively, which are provided by the dynamic shuttle server as part of the service protocol.
\end{definition} 
MPS constraints are priority constraints that serve as a guarantee of the dynamic shuttle service quality, such that passengers will not experience a long waiting time or travel delay. They are especially significant in the dynamic routing scenario in order to prevent the possibility of any passenger from indefinite deferment of pickup and drop-off by the algorithm.

From the above definitions, we note that there are essentially three queues associated with the passengers, which correspond to three different sequences $\{k\}$, $\{p_k\}$, and $\{d_k\}$, where $\{k\}$ corresponds to the sequence of requests, $\{p_k\}$ and $\{d_k\}$ correspond to those of pickups and drop-offs, respectively. The sequences $\{p_k\}$ and $\{d_k\}$ are permutations of $\{k\}$ in the extreme case where each cluster consists of only one event. In general, $\{p_k\}$ and $\{d_k\}$ include both permutations of subsets of $\{k\}$ and repetitions, as the orders of different pickups/drop-offs may be tied if in the same cluster.

\begin{definition}[Departure Constraint]
\label{def:departure}
\textit{Departure constraints} specify that for those clusters which contain pickup events, the shuttle should not depart from the routing point before the associated passengers arrive and get on board. 
\end{definition}

In our problem formulation, legitimate, capacity and MPS constraints are named \textit{primary constraints} as they affect the feasibility of the clustering pattern and the sequence of the clusters to be visited. While the departure constraints are called \textit{secondary constraints} which can always be satisfied by means of forcing the shuttle to wait at the routing points and adjust the departure times, as long as the three sets of primary constraints have all been met.

\subsection{Master Problem}
\label{subsec:master}

To serve a given set of requests, the dynamic shuttle service needs to determine: i) the clustering pattern, ii) the sequence of clusters to be visited, and iii) the routing point of each cluster. Then the shuttle will start from the depot $O$, visit each routing point exactly once in the right order, and end at a feasible terminal point. In this manuscript, we will not be covering how to determine the clustering pattern, but rather briefly address two main features while designing clusters as follows:
\begin{enumerate}
	\item A clustering pattern should be \textit{feasible}. \\
	A clustering pattern is feasible if and only if there exists at least one feasible sequence of the corresponding clusters that satisfies all the three sets of primary constraints. It should be noted that the primary constraints are originally defined for individual passengers, once clusters are formed, they are transferred to clusters, i.e., sets of events associated with individual passengers. For example, legitimate constraints are transferred in the way that $\forall k \in Z_1$, the cluster which contains $E_k^p$ should be visited before that contains $E_k^d$. Capacity and MPS constraints can be transferred as well. Feasibility is a \textit{hard requirement} for clustering patterns.
	\item A clustering pattern should be \textit{favorable}. \\ 
	Feasibility is necessary for a clustering pattern, however not sufficient. The reason is that though clustering helps reduce the number of stops for a trip serving a given set of passengers, which is likely to reduce the cost of the system, it may also possibly rule out certain favorable sequences or routing points due to extra restrictions resulted from the binding of different events (Fig. \ref{fig:favorable}). Therefore, a good clustering pattern that can be in use should also be favorable in the sense that it has potential to result in a reduction on $\mathcal C$ defined in (\ref{eqn:objfundef}), through a suitable design of the sequence and routing points. Though hard to define in a rigorous manner, the main philosophy behind a favorable clustering pattern is that it groups events which are not only close in space, i.e., share common intersections, but also close in the optimal sequence if no clustering is applied. Favorability is a \textit{soft requirement} for clustering patterns.  
\end{enumerate}
\begin{figure}[h!]
	\centering
	\includegraphics[width=1\textwidth]{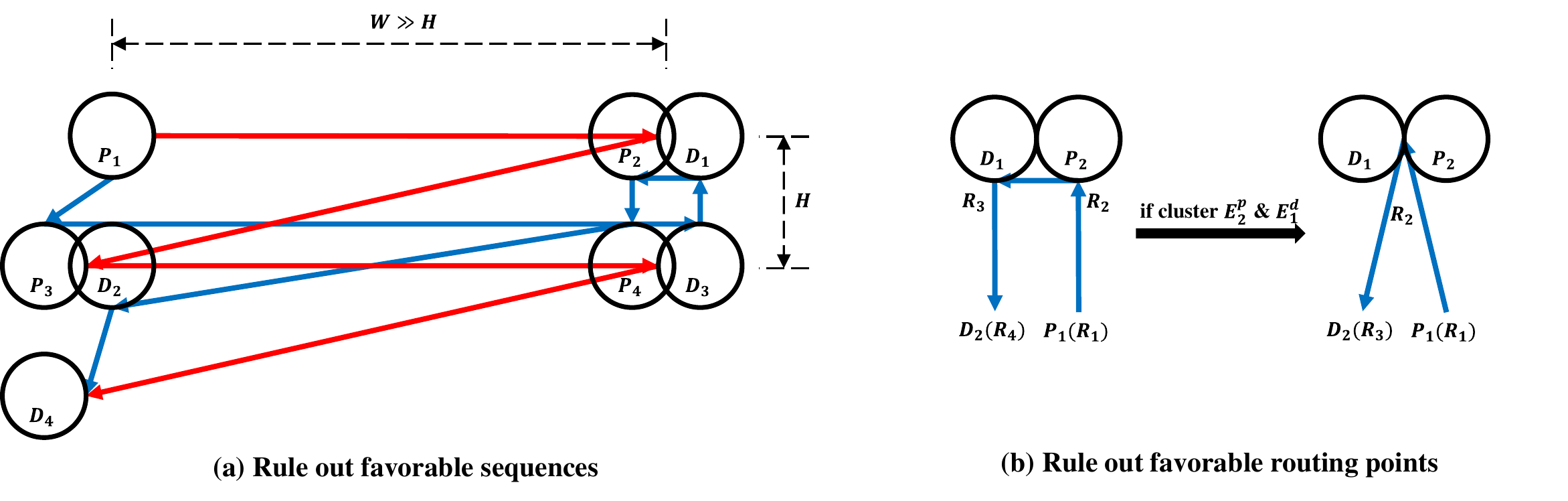}
	\caption{Scenarios where certain favorable sequences or routing points are ruled out due to clustering. (a): The blue route outperforms the red one, which is the only feasible route if cluster $\{E_2^p, E_1^d\}$, $\{E_3^p, E_2^d\}$ and $\{E_4^p, E_3^d\}$, in terms of the total travel distance of the shuttle. (b) The route on the right is worse in terms of the total travel distance, if $\{E_2^p, E_1^d\}$ are grouped therefore there is only one feasible routing point for the cluster.}
	\label{fig:favorable}
\end{figure} 

In what follows, we will assume that a feasible and favorable clustering pattern is explicitly given, based on which the determination of the sequence and routing points will be explored in order to minimize the cost defined in (\ref{eqn:objfundef}). In order to formally state this optimization problem, we construct a $N$ dimensional vector $S$ to represent the sequence of the clusters to be visited, where $S$ is a permutation of $[1, 2, \dots, N]$, $S(i)$ indicates the order of cluster $C_i$ in the visiting queue of the shuttle, $\forall i \in Z_2$. Denote $C_0=\emptyset$ as a dummy cluster representing the depot, whose corresponding space window is $S_W(O, 0)$ and $A_0=O$, $R_0=O$. Denote $R=(R_0, R_1, \dots, R_N)$. With the objective function specified as in (\ref{eqn:objfundef}), and the four sets of constraints as in Definitions 2.7 through 2.10, the \textit{master problem} for the rest of the manuscript is formally stated as  
\begin{equation}
\label{eqn:masterproblem}
\operatorname*{min}_{(S, R)\in S_f\times R_f} \mathcal C(S, R)
\end{equation}
where $S_f$ and $R_f$ are the feasible domains of the decision variables $S$ and $R$, respectively, that satisfy all of the constraints listed in Section \ref{subsec:cons}, and $R_f=A_0\times A_1\times \dotsb \times A_N$. 

\section{A Mixed Integer Quadratically Constrained Programming Formulation}
\label{sec:miqcp}
The underlying master problem is to minimize the time cost of the system defined in (\ref{eqn:objfundef}) subject to the four sets of constraints specified in Definitions 2.7 through 2.10. This has been compactly restated as the optimization problem in (\ref{eqn:masterproblem}). Given the discrete nature of $S$ and the continuous nature of $R$, it follows that the problem is one of a MIP. As the Euclidean distance metric is exploited, the formulation includes quadratic constraints as well. A natural next step therefore is to evaluate the use of a standard MIQCP formulation to solve the master problem. An MIQCP formulation is a model of an optimization problem that exploits both continuous and discrete decision variables and has constraints containing quadratic terms \cite{berthold2012extending}. In this section, we carry out such an evaluation.

Given that the shuttle starts the trip from the depot $O$ and ends at a terminal cluster, in addition to $C_0$ denoting the depot, we add another dummy cluster $C_{N+1}$ to denote the virtual last stop. This is to ensure that the problem has an open-ended setting so that the shuttle is eligible to terminate the trip at any cluster as long as it is feasible. The travel time from the routing point of any other cluster to that of $C_{N+1}$ is set to be zero and the shuttle is forced to end the trip with $C_{N+1}$. The associated $A_{N+1}$ and $R_{N+1}$ have no actual physical meaning.

In addition to $Z_1$ and $Z_2$, we introduce $Z_3$ and $Z_4$ to facilitate the formulation as follows.
\begin{enumerate}
	\item $Z_3:=\{(i, j) \, | \, i \in \{0\} \cup Z_2, j \in Z_2 \cup \{N+1\}, \, i \neq j, \, \text{and} \, (i, j) \neq (0, N+1) \}$
	\item $Z_4:=\{(i, j) \, | \, i \in \{0\} \cup Z_2, j \in Z_2, \, \text{and} \, i \neq j\}$
\end{enumerate}

For each pair of clusters $(i, j) \in Z_3$, let $x_{ij}=1$ if the shuttle travels from $A_i$ to $A_j$, otherwise $x_{ij}=0$. $t_{ij}$ denotes the travel time of the shuttle from $R_i$ to $R_j$. For each $i \in \{0\} \cup Z_2 \cup \{N+1\}$, $T_i$ denotes the departure time of the shuttle from $R_i$, and $T_0=0$ is set as the departure time of the trip from $O$. $q_i$ denotes the number of passengers on board after the shuttle visits cluster $C_i$. It is clear that $q_0=q_{N+1}=0$. $f_i^p$ and $f_i^d$ denote the number of finished pickups and drop-offs before the shuttle visits cluster $C_i$, respectively. It follows that $f_0^d=f_0^d=0$ and $f_{N+1}^p=f_{N+1}^d=n$. Let $\ell_i^p$, $\ell_i^d$ and $\ell_i$ denote the number of pickups, drop-offs and net loads of cluster $C_i$, respectively, where net load denotes the difference between the number of pickups and that of drop-offs. That is, $\ell_i=\ell_i^p-\ell_i^d$, with $\ell_0^p=\ell_0^d=\ell_0=0$ and $\ell_{N+1}^p=\ell_{N+1}^d=\ell_{N+1}=0$. Moreover, for each $k \in Z_1$, $w_k^p$ and $w_k^d$ denote the walking times of passenger $k$ before being picked up and after being dropped off, respectively, and $c\ell_k^p, c\ell_k^d$ denote the indexes of the clusters which contain $E_k^p$ and $E_k^d$, respectively. It should be noted that $\ell_i^p, \ell_i^d, \ell_i, c\ell_k^p, c\ell_k^d, \forall i \in \{0\} \cup Z_2 \cup \{N+1\}, k \in Z_2$ are constants associated with the clustering pattern.      

With the above definitions, the master problem in (\ref{eqn:masterproblem}) can be restated as an MIQCP formulation
\begin{equation}
\label{eqn:objfundefmiqcp}
\operatorname*{Min} \mathcal C=\gamma_1 T_{N+1} + \gamma_2 \sum_{k=1}^{n} (\alpha_1 T_{c\ell_k^p} + \alpha_2 (T_{c\ell_k^d}-T_{c\ell_k^p}) + \alpha_3^p w_k^p + \alpha_3^d w_k^d)
\end{equation}

subject to
\begin{align}
\sum_{j \in Z_2} x_{0j} &= 1 \label{eqn:conmiqcp_1} \\
\sum_{i \in \{0\} \cup Z_2} x_{ij} &= 1 &&\forall j \in Z_2 \label{eqn:conmiqcp_2} \\
\sum_{j \in Z_2 \cup \{N+1\}} x_{ij} &= 1 &&\forall i \in Z_2 \label{eqn:conmiqcp_3} \\
\sum_{i \in Z_2} x_{i(N+1)} &= 1 \label{eqn:conmiqcp_4} \\
x_{ij}&\in \{0, 1\} &&\forall (i, j) \in Z_3 \label{eqn:conmiqcp_5} \\
T_j &\geq (T_i+t_{ij}+t_a)x_{ij} &&\forall (i,j) \in Z_3 \label{eqn:conmiqcp_6} \\
T_{c\ell_k^d} &\geq T_{c\ell_k^p}+ t_{(c\ell_k^p)(c\ell_k^d)}+t_a &&\forall k \in Z_1 \label{eqn:conmiqcp_7} \\
c &\geq q_i &&\forall i \in \{0\} \cup Z_2 \cup \{N+1\} \label{eqn:conmiqcp_8} \\
q_j &\geq (q_i+\ell_j)x_{ij} &&\forall (i, j) \in Z_3 \label{eqn:conmiqcp_9} \\
f_j^p &\geq (f_i^p+\ell_i^p){x_{ij}} &&\forall (i, j) \in Z_3 \label{eqn:conmiqcp_10} \\
f_j^d &\geq (f_i^d+\ell_i^d){x_{ij}} &&\forall (i, j) \in Z_3 \label{eqn:conmiqcp_11} \\
|(f_{c\ell_k^p}^p+1)-k| &\leq MPS^p &&\forall k \in Z_1 \label{eqn:conmiqcp_12} \\
|(f_{c\ell_k^d}^d+1)-k| &\leq MPS^d &&\forall k \in Z_1 \label{eqn:conmiqcp_13} \\
R_i &\in A_i &&\forall i \in \{0\} \cup Z_2 \label{eqn:conmiqcp_14} \\
{v_s}^2{t_{ij}}^2&=||R_i-R_j||_2^2 &&\forall (i, j) \in Z_4 \label{eqn:conmiqcp_15} \\
t_{i(N+1)}&=0 &&\forall i \in Z_2 \label{eqn:conmiqcp_16} \\
{v_p}^2{w_k^p}^2&=||P_k-R_{c\ell_k^p}||_2^2 &&\forall k \in Z_1 \label{eqn:conmiqcp_17} \\
{v_p}^2{w_k^d}^2&=||D_k-R_{c\ell_k^d}||_2^2 &&\forall k \in Z_1 \label{eqn:conmiqcp_18} \\
w_k^p &\leq T_{c\ell_k^p} &&\forall k \in Z_1 \label{eqn:conmiqcp_19} 
\end{align}

Equation (\ref{eqn:objfundefmiqcp}) is a restatement of the objective function in the MIQCP formulation, which is equivalent to that in (\ref{eqn:objfundef}). $T_{N+1}$ corresponds to the departure time from cluster $C_{N+1}$, which equals the finishing time of the trip. $T_{c\ell_k^p}$ represents the difference between the pickup time and the request time therefore equals the waiting time of passenger $k$. Similarly, $T_{c\ell_k^d}-T_{c\ell_k^p}$ represents the difference between the drop-off time and the pickup time and hence equals the $k$th passenger's riding time.

Constraints (\ref{eqn:conmiqcp_1})-(\ref{eqn:conmiqcp_4}) ensure that the shuttle departs from the depot $O$, ends the trip at the dummy cluster $C_{N+1}$, and visits every other cluster exactly once. Consistence of the departure times is guaranteed by constraints (\ref{eqn:conmiqcp_6}), which essentially serve as the subtour elimination conditions. Equations (\ref{eqn:conmiqcp_7}) and (\ref{eqn:conmiqcp_8}) ensure that the legitimate constraints and capacity constraints are satisfied. Equations (\ref{eqn:conmiqcp_9})-(\ref{eqn:conmiqcp_11}) guarantee the consistence of the numbers of passengers on board, finished pickups and drop-offs at anytime of the trip, respectively. Equations (\ref{eqn:conmiqcp_12}) and (\ref{eqn:conmiqcp_13}) ensure that the MPS constraints are obeyed. Equations (\ref{eqn:conmiqcp_14}) essentially guarantee that each routing point is within the corresponding area. Equations (\ref{eqn:conmiqcp_15}) and (\ref{eqn:conmiqcp_16}) define the travel time of the shuttle between each pair of routing points, and Equations (\ref{eqn:conmiqcp_17}) and (\ref{eqn:conmiqcp_18}) defines the walking times of the passengers before being picked up and after being dropped off, respectively. Equations (\ref{eqn:conmiqcp_19}) ensure that the shuttle departs from each area after all passengers whose pickups are associated have already arrived at the corresponding routing point and gotten on board. 

This formulation is nonlinear because of constraints (\ref{eqn:conmiqcp_6}), (\ref{eqn:conmiqcp_9})-(\ref{eqn:conmiqcp_11}), (\ref{eqn:conmiqcp_14}), (\ref{eqn:conmiqcp_15}), (\ref{eqn:conmiqcp_17}) and (\ref{eqn:conmiqcp_18}). Of these, we note that (\ref{eqn:conmiqcp_6}) and (\ref{eqn:conmiqcp_9})-(\ref{eqn:conmiqcp_11}) are intrinsically linear and can be simplified into equivalent linear formats by introducing extra explaining constants. For example, the subtour elimination constraints in (\ref{eqn:conmiqcp_6}) are equivalent to
\begin{equation}
\label{eqn:subtourmiqcp}
T_j \geq T_i + t_{ij}+t_a-T_{ij}(1-x_{ij})
\end{equation}
for each $(i,j) \in Z_3$, where $T_{ij} \geq T_i+t_{ij}+t_a$ and $T_j \geq 0$. Simply set $T_{ij}=T_c$ where $T_c \geq \underset{(i, j) \in Z_3}{\text{max}} \, (T_i+t_{ij}+t_a)$ in the implementation. This technique is very similar to that used in the TSP formulation in \cite{miller1960integer, cordeau2006branch}. Using the same procedure, (\ref{eqn:conmiqcp_9})-(\ref{eqn:conmiqcp_11}) can be linearized as follows
\begin{equation}
\label{eqn:loadmiqcp}
q_j \geq q_i+\ell_j- Q_{ij}(1-x_{ij})
\end{equation}
\begin{equation}
\label{eqn:pickmiqcp}
f_j^p \geq f_i^p+\ell_i^p-F_{ij}^p(1-x_{ij})
\end{equation}
\begin{equation}
\label{eqn:dropmiqcp}
f_j^d \geq f_i^d+\ell_i^d-F_{ij}^d (1-x_{ij})
\end{equation}
where $Q_{ij} \geq q_i+\ell_j$ and $q_j \geq 0$. Similarly set $Q_{ij}=Q_c$ where $Q_c \geq \underset{(i, j) \in Z_3}{\text{max}} \, (q_i+\ell_j)$. And $F_{ij}^p \geq f_i^p+\ell_i^p$, $F_{ij}^d \geq f_i^d+\ell_i^d$, $f_j^p, f_j^d \geq 0$. Set $F_{ij}^p=F_{ij}^d=F_c$ where $F_c \geq \text{max} \bigg\{ \underset{i \in Z_2}{\text{max}} \, (f_i^p+\ell_i^p), \underset{i \in Z_2}{\text{max}} \, (f_i^d+\ell_i^d) \bigg\}$.

The remaining constraints (\ref{eqn:conmiqcp_14}), (\ref{eqn:conmiqcp_15}), (\ref{eqn:conmiqcp_17}), and (\ref{eqn:conmiqcp_18}) contain nonlinearities which stem from the inherent quadratic nature of the Euclidean distance metric which cannot be linearized unless via approximation. Among them, (\ref{eqn:conmiqcp_14}) are convex as the space window for each pickup/drop-off event is a circle which is convex, and therefore the intersection of any combination of the original space windows is convex as well. (\ref{eqn:conmiqcp_15}), (\ref{eqn:conmiqcp_17}), and (\ref{eqn:conmiqcp_18}) are non-convex due to the equality sense in the definitions of the time cost terms. Typical methods such as convex relaxation with Second Order Cone (SOC) constraints can be utilized to accommodate the non-convexity. One such approach is detailed below.
For each $(i, j) \in Z_3$ and each $k \in Z_2$, introduce rotational explaining variables 
\begin{equation}
\label{eqn:rotationmiqcp}
R_{ij}^r=\frac{R_i-R_j}{v_s}
\end{equation}
and shifted explaining variables
\begin{equation}
\label{eqn:shiftmiqcp}
R_k^{s, p}=\frac{R_{c\ell_k^p}-P_k}{v_p}, \quad R_k^{s, d}=\frac{R_{c\ell_k^d}-D_k}{v_p}
\end{equation}
Therefore the SOC formulations of (\ref{eqn:conmiqcp_15}), (\ref{eqn:conmiqcp_17}), and (\ref{eqn:conmiqcp_18}) are
\begin{equation}
\label{eqn:socrotationmiqcp}
||R_{ij}^r||_2^2 \leq {t_{ij}}^2
\end{equation}
\begin{equation}
\label{eqn:socshiftmiqcp}
||R_k^{s, p}||_2^2 \leq {w_k^p}^2, \quad ||R_k^{s, d}||_2^2 \leq {w_k^d}^2
\end{equation}
where $t_{ij} \geq 0$, and $w_k^P, w_k^d \geq 0$.

Such an SOC based relaxation approach becomes exact during the process of the minimization of (\ref{eqn:objfundefmiqcp}), which guarantees that the SOC constraints (\ref{eqn:socrotationmiqcp})-(\ref{eqn:socshiftmiqcp}) are equivalent to the original definitions of the time cost terms in (\ref{eqn:conmiqcp_15}), (\ref{eqn:conmiqcp_17}), and (\ref{eqn:conmiqcp_18}). The overall dimensionality of the decision variables and constraints of the MIQCP formulation are summarized in Table \ref{tab:dimensionmiqcp}.
\begin{table}[]
	\centering
	\caption{Dimensionality of the decision variables and constraints of the MIQCP formulation.}
	\label{tab:dimensionmiqcp}
	\begin{tabular}{ccr}
		\toprule
		\multicolumn{1}{c}{}        & \multicolumn{1}{c}{Category} & \multicolumn{1}{c}{Dimensionality} \\ \midrule
		\multirow{3}{*}{Variables}   & Binary                        & $N^2+N$                             \\
		& Continuous                    & $3N^2+7N+6n+10$                     \\ \cmidrule{2-3} 
		& Sum                           & $4N^2+8N+6n+10$                     \\ \midrule
		\multirow{4}{*}{Constraints} & Quadratic                     & $2n$                                \\
		& SOC                           & $N^2+2n$                            \\
		& Linear                        & $6N^2+7N+10n+11$                    \\ \cmidrule{2-3} 
		& Sum                           & $7N^2+7N+14n+11$                    \\ 
		\bottomrule
	\end{tabular}
\end{table}
We note that if Manhattan distance or metrics that correspond to real street topologies rather than a Euclidean distance metric is used, the problem can be further reduced into an MIP formulation with linear constraints.

Given that the master problem in (\ref{eqn:masterproblem}) is now formulated to a standard MIQCP, commercial solvers, such as Gurobi in our implementation, can be used to find the optimal solution with sufficient computational time. This is because the number of feasible sequences is finite, and Gurobi basically traverses all possible sequence and computes the corresponding optimal routing points. However, the computational complexity is huge as both the dimensionality of the decision variable as well as constraints are quadratic in the size of the problem $(n, N)$. For example, if 10 passengers are to be served and 16 clusters are formed, there are 1,222 decision variables in total, 272 of which are binary. The total number of constraints is 2,055 and 20 of them are quadratic, 276 are SOC and the rest are linear. It can be computationally highly burdensome, and necessitates the development of a few heuristics. This is the subject of Section \ref{sec:altmin}.

\section{An Alternating Minimization Algorithm}
\label{sec:altmin}

We propose an AltMin algorithm in this section to solve the master problem formulated in (\ref{eqn:masterproblem}). Due to its simplicity and effectiveness, AltMin has long been a popular optimization method, dating back to early works in the optimization literature (e.g., \cite{ortega2000iterative}), and has been widely studied under various assumptions (e.g., \cite{auslender1976optimisation, luo1993error}). It consists of a block coordinate descent approach, and operates by adjusting one block of coordinates at a time so as to attain successive reductions in the objective function. Given the presence of two sets of decision variables $S$ and $R$ in the master problem under consideration, and given that they are quite distinct from another, the use of such an approach is fairly natural. We therefore develop an AltMin algorithm with two phases, where in Phase 1, we optimize $\mathcal C$ over $S$, keeping $R$ fixed, while in Phase 2, we keep $S$ fixed and optimize $\mathcal C$ over $R$. A combination of consecutive Phase 1 plus Phase 2 constitutes one \textit{mega iteration} in our AltMin algorithm.

More formally, let $(S^\ast, R^\ast)$ denotes the desired decision variables that optimizes $\mathcal C$ in (\ref{eqn:masterproblem}). That is
\begin{equation}
\label{eqn:masterproblemargmin}
(S^\ast, R^\ast)=\operatorname*{argmin}_{(S, R)\in S_f\times R_f} \mathcal C(S, R)
\end{equation}
It should be noted that $(S^\ast, R^\ast)$ can be derived exactly via the MIQCP formulation in Section \ref{sec:miqcp}.
The AltMin algorithm is proposed to solve $(S^\ast, R^\ast)$ approximately via alternately iterating Phase 1 and Phase 2, where at the $h$th mega iteration, $\forall h \in \mathbb Z_{>0}$, Phases 1 and 2 correspond to updates
\begin{equation}
\label{eqn:step1}
S^h=\operatorname*{argmin}_{S\in S_f} \, \mathcal C(S, R^{h-1})
\end{equation}
\begin{equation}
\label{eqn:step2}
R^h=\operatorname*{argmin}_{R\in R_f} \, \mathcal C(S^h, R)
\end{equation}
respectively, where $R^0$ is the initialization of the routing points. The mega iterations are terminated with a suitable stopping criterion.

The advantages of the AltMin algorithm are mainly that it decomposes the master problem, which is complicated and extremely computationally costly, into two subproblems, which are both fairly easy to solve. And in many circumstances, it is powerful in terms of optimality and computational efficiency \cite{csisz1984information, niesen2009adaptive, wang2008new}. Specifically, in our routing task, compared with the MIQCP formulation which seeks to find the optimal solution by traversing all feasible sequences, the AltMin algorithm restricts its search to a subset of the entire solution space, which is determined judiciously through successive Phase 1-Phase 2 optimizations.

One possible drawback is that the decoupling process of $S$ and $R$ results in the intrinsic heuristic nature of AltMin. The convergence and optimality properties of the generic AltMin algorithm can be found in various references such as \cite{jain2013low, grippo2000convergence, tseng2001convergence, lin2007projected}, which follow under certain regularity conditions. However, the mixed presence of both discrete and continuous decision variables introduces significant challenges in providing similar analytical guarantees for convergence and optimality. Instead, in this manuscript, we focus on a numerical demonstration of the superiority of AltMin over MIQCP using real operational data over a range of scenarios. Before proceeding to this demonstration in Section \ref{sec:experiments}, we discuss details of Phases 1 and 2 of the proposed AltMin algorithm in this section.

To start the AltMin algorithm, the routing points need to be initialized as $R^0$. For simplicity and consistency, each element $R^0_i$ of $R^0$ is computed as the center of mass of $A_i, \forall i \in \{0\} \cup Z_2$, in our implementation. With this initialization, we discuss details of Phase 1 in Section \ref{sub:phase1}, details of Phase 2 in Section \ref{sub:phase2}, convergence analysis and stopping criterion in Section \ref{subsec:stop}.

\subsection{Phase 1}
	\label{sub:phase1}
As mentioned above, in Phase 1, the routing points $R$ are fixed, and the focus is on the optimization of $\mathcal C$ over $S$. In our problem formulation, the shuttle is assumed to travel between the areas directly following the straight lines connecting the corresponding routing points, therefore the statuses of the system of when the shuttle is at each cluster are sufficient to determine the sequence of visit. The status of the system when the shuttle is at each cluster is uniquely determined by: 1) the cluster it is current at, and 2) the other clusters that have been visited. With this assumption, we define the state vectors of the system, and derive the underlying recurrence relation that corresponds to Phase 1 using these states. This is described in Sections \ref{subsubsec:vector} through Section \ref{subsubsec:recurrence}.
\subsubsection{State Vector}
\label{subsubsec:vector}
Let an $N+1$ dimensional vector $(L, e_1, e_2, \dots, e_N)$ denote the state of the system where
\begin{itemize}
	\item $L \in \{0\} \cup Z_2$ is the index of the cluster that the shuttle is currently at. $L=0$ corresponds to the shuttle being at the depot, and $1 \leq L \leq N$ corresponds to the shuttle being at cluster $C_L$.
	\item $e_i \in \{0, 1\}, \forall i \in Z_2$ is a flag that indicates whether the shuttle has visited cluster $C_i$ or not, with $e_i=1$ if the shuttle has visited or is currently at cluster $C_i$, and $e_i=0$ otherwise.
\end{itemize}
The definition of the state vector implies that the \textit{initial state} of the shuttle could only be $(0, 0, 0, \dots, 0)$. And due to the open-ended setting of our formulation, the shuttle is eligible to finish the trip with any feasible cluster except for $C_0$. Therefore, a \textit{terminal state} is expressed as $(L, 1, 1, \dots, 1), \forall L \in Z_2$.

\subsubsection{Feasibility of State Vectors} 
\label{subsubsec:vectorfeasibility}
A state vector is feasible if and only if: 1) it is consistent by definition, 2) it satisfies all primary constraints specified in Section \ref{subsec:cons}. Each of these cases is detailed below.

\textit{4.2.2.1 Consistency}

A state vector should be consistent by definition, essentially: if $L=0$, it follows that $e_i=0, \forall i \in Z_2$; otherwise if $1 \leq L \leq N$, it follows that $e_L=1$.

\textit{4.2.2.2 Capacity Constraints}

Recalling for each $i \in Z_2$, $\ell_i$ is the net load, which corresponds to the difference between the number of passengers been picked up and that of passengers been dropped off, at cluster $C_i$. It follows that capacity constraints are satisfied if
\begin{equation}
\label{eqn:capacitystate}
\sum_{i \, | \, e_i=1, \, i \in Z_2} \ell_i \leq c
\end{equation}
The left hand side of (\ref{eqn:capacitystate}) computes the sum of net loads of the shuttle from departure until the current state, therefore equates to the number of passengers who are currently on board.

\textit{4.2.2.3 MPS Constraints}

$\sum_{i \, | \, e_i=1, \, i \in Z_2 \setminus \{L\}} \ell_i^p$ and $\sum_{i \, | \, e_i=1, \, i \in Z_2 \setminus \{L\}} \ell_i^d$ correspond to the number of total finished pickups and drop-offs prior to visiting the current cluster $C_L$, respectively. Therefore, the MPS constraints can be verified as follows
\begin{equation}
\label{eqn:mpspstate} 
\Bigg| \bigg(\sum_{i \, | \, e_i=1, \, i \in Z_2 \setminus \{L\}} \ell_i^p + 1 \bigg) - k \Bigg| \leq MPS^p, \quad \forall k \, | \, E_k^p \in C_L
\end{equation}
\begin{equation}
\label{eqn:mosdstate}
\Bigg| \bigg(\sum_{i \, | \, e_i=1, \, i \in Z_2 \setminus \{L\}} \ell_i^d + 1 \bigg) - k \Bigg| \leq MPS^d, \quad \forall k \, | \, E_k^d \in C_L
\end{equation}

\textit{4.2.2.4 Legitimate Constraints}

The legitimate constraints are accommodated via the fact that a state is feasible only if there exist at least one feasible \textit{next state}, as long as it is not a terminal state. A next state of a given state is defined as one that is eligible to occur immediately after the current state. A state vector $(L', e_1', e_2', \dots, e_N'), L' \neq 0$, where $L'$ is the index of the \textit{next cluster} of the current state, can be a next state if and only if
\begin{enumerate}
	\item The cluster $C_{L'}$ has not been visited yet, i.e., $e_{L'}=0$. This condition will automatically guarantee that all passengers to be picked up/dropped off have not been picked up/dropped off yet, i.e., the events contained in $C_{L'}$ have not yet occurred.
	\item The passengers to be dropped off have already been picked up, i.e., $\forall E_{k_d}^d \in C_L, \, e_{c\ell_{k_d}^p}=1$.
	\item The status of the clusters will remain the same except for that of cluster $C_{L'}$, i.e., $e_{L'}=1$ while $e_i'=e_i, \forall i \in Z_2 \, \text{and} \, i \neq L'$. 
\end{enumerate}
This screening of feasible next states guarantees that the legitimate constraints of all passengers are satisfied. The secondary constraints, which are defined in Definition 2.10, are not considered in Phase 1 as they does not affect the feasibility of the sequence of clusters to be visited.  

\subsubsection{Recurrence Relation} 
\label{subsubsec:recurrence}
Once the feasibility verifications of all possible states have been conducted, the next step is optimality considerations, which is accomplished via a backward recursion. Let $V(L, e_1, e_2, \dots, e_N)$ be the optimal system cost measured in terms of $\mathcal C$ defined in (\ref{eqn:objfundef}) of all subsequent decisions from the current state $(L, e_1, e_2, \dots, e_N)$ onwards till the end of the trip, i.e., any feasible terminal state is reached. It should be noted that $V$ is only defined for the states which are feasible. It is clear that $\mathcal C^\ast=V(0, 0, 0, \dots, 0)$ and $V=0$ for any terminal state. The recurrence relation of $V(L, e_1, e_2, \dots, e_N)$ is therefore given by
\begin{equation} 
	\label{eqn:recurrence}
	V(L, e_1, e_2, \dots, e_N)
	=\left\{ 
	\begin{array}{cl} 
		\underset{\forall L'}{\text{min}} \Big[ t(L, L') \cdot M + V(L', e_1', e_2', \dots, e_N') \Big] &\text{if not a terminal state}\\
		0 &\text{otherwise}
	\end{array} \right.
\end{equation}
The term $t(L, L') \cdot M$ denotes the subcomponent of $\mathcal C$ that corresponds to the incremental time cost incurred due to the travel of the shuttle from cluster $C_L$ to $C_{L'}$. It is distributed over the first, second, and third term in (\ref{eqn:objfundef}), since it includes time costs incurred by the shuttle, the passengers waiting to be picked up, and those that are riding on the shuttle. Therefore $M$ is given by 
\begin{equation}
\label{eqn:minbhk}
M=\gamma_1+\gamma_2 \Bigg( \alpha_1\sum_{i \, | \, e_i=0, \, i \in Z_2} \ell_i^p + \alpha_2\sum_{i \, | \, e_i=1, \, i \in Z_2} \ell_i \Bigg)
\end{equation}
It should be noted that $t(L, L')=\frac{||R_L-R_{L'}||_2}{v_s}+t_a$ is the time taken for the shuttle to travel from $C_L$ to $C_{L'}$. It is possible to calculate $t(L, L')$ off line as the routing points are fixed prior to this phase.

\subsubsection{Remarks}
Several remarks regarding Phase 1 are discussed as follows:
\begin{enumerate}
	\item The sequence of the clusters to be visited consists of a series of sequential transitions of feasible system states. Let $\mathcal{E}=\sum_{i \in Z_2} e_i$ denote the \textit{entropy} of the state $(L, e_1, e_2, \dots, e_N)$. One can see that $\mathcal{E}=0$ in the initial state, increased by one at each state transition and ending with $\mathcal{E}=N$ at any feasible terminal state. In the implementation of Phase 1, the optimization relation between adjacent states in (\ref{eqn:recurrence}) only occurs at states that are feasible. (\ref{eqn:recurrence}) is implemented as a backward recursion which starts from all feasible terminal states, decreases the entropy by one at each step, and finds the optimal next state for every feasible state, until the initial state is reached. The optimal next state for all feasible states is memorized in this manner. Together, the overall AltMin algorithm starts from the initial state, traces forward through successive optimal next states, and puts together the overall optimal route.
	\item \textit{Computational Complexity}. By definition, there are at most  $(N+1) 2^N$ possible states to begin with, which are decreased to $1+N 2^{N-1}$ after a consistency check. For each of these states, the further feasibility check based on the primary constraints and the optimization screening on the therefore feasible states both take $O(n)$ time. Hence, the overall running time is $O(n N 2^{N-1})$. Due to the presence of the primary constraints, the number of feasible states is in fact much smaller than $1+N2^{N-1}$, though this is not able to improve the asymptotic running time of Phase 1, it helps accelerate the algorithm significantly in actual computational experiments. In particular, the algorithm adopts a Dynamic Programming (DP) paradigm which provides an exact solution for Phase 1, though the computational complexity is exponential with respect to the problem size, it is asymptotically better than classical DP approaches \cite{psaraftis1980dynamic}. 
	\item It should be noted that the problem addressed here in Phase 1 is a generalized TSP. The specific approach used is akin to the Bellman-Held-Karp (BHK) algorithm \cite{psaraftis1980dynamic}. Certain modifications have to be made, however, in order to address the fact that the algorithm has to derive the sequencing of sets of events, i.e. clusters, rather than sequencing single events. Moreover, the overall master problem is a even more generalized TSP, which is defined over areas, instead of points.
	\item The modified BHK algorithm we have proposed is suitable for shuttles with capacity up to 15. If even larger capacities need to be considered, one can either distribute the implementation of the modified BHK algorithm, e.g., system states with the same entropy have no dependency, therefore their feasibility screenings and optimization considerations could both be parallelized to speed up the computation, or certain heuristics can be developed. 
	\item The main analytical property of the solution derived via the modified BHK algorithm to Phase 1 is summarized in Lemma \ref{lemma:phase1}.
\end{enumerate}

\begin{lemma}[Uniqueness and Optimality of Phase 1]
	\label{lemma:phase1}
	$\forall h \in \mathbb Z_{\geq 0}$, given $R^h$, $S^{h+1}$ is uniquely determined in (\ref{eqn:step1}) via Phase 1 and it is optimal.  
\end{lemma}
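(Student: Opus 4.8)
The plan is to derive both claims from the exact correctness of the modified BHK dynamic program that implements Phase~1. With $R^h$ held fixed, the only freedom in $\mathcal{C}(S,R^h)$ is the order of visits encoded by $S$, and the heart of the argument is to show that the objective separates additively along the state transitions of the program in precisely the form assumed by the recurrence \eqref{eqn:recurrence}, with per-transition multiplier \eqref{eqn:minbhk}. Once this separability and the accompanying Markov property are established, optimality is a direct consequence of the Bellman optimality principle, and uniqueness follows from the determinism of the backward recursion.

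First I would fix $R^h$ and record that the two walking-time terms $WalkT_k^p, WalkT_k^d$ in \eqref{eqn:objfundef} are determined entirely by the routing points and the requested locations; they are therefore constants in the Phase-1 subproblem \eqref{eqn:step1} and do not affect $\operatorname*{argmin}_{S}\mathcal{C}(S,R^h)$. The remaining cost is a weighted sum of the shuttle travel time and of the passengers' waiting and riding times. Writing $WaitT_k=T_{c\ell_k^p}$ and $RideT_k=T_{c\ell_k^d}-T_{c\ell_k^p}$ as cumulative sums of the per-segment times $t(L,L')$ along the route and exchanging the order of summation, I would attribute to each traversed edge $C_L\!\to\!C_{L'}$ a coefficient equal to $\gamma_1$ (the shuttle itself) plus $\gamma_2\alpha_1$ times the number of passengers whose pickup is still pending plus $\gamma_2\alpha_2$ times the number currently on board. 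The key bookkeeping step is to verify that, at the state $(L,e_1,\dots,e_N)$ from which the edge departs, the pending-pickup count equals $\sum_{i\,|\,e_i=0}\ell_i^p$ and the on-board count equals the net load $\sum_{i\,|\,e_i=1}\ell_i$, so that this coefficient is exactly the $M$ of \eqref{eqn:minbhk}. This reduces $\mathcal{C}(S,R^h)$, up to the additive walking constant, to $\sum_{\text{edges}} t(L,L')\cdot M$.

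Because $M$ and the set of admissible terminal states depend on the state vector alone and not on the path by which it was reached, the induced cost-to-go obeys the Bellman optimality principle. I would then argue by backward induction on the entropy $\mathcal{E}=\sum_i e_i$ that $V$ defined by \eqref{eqn:recurrence} equals the true minimal cost-to-go at every feasible state: the base case is $V=0$ at every terminal state, and the inductive step is the minimization over feasible next states, whose screening in Section~\ref{subsubsec:vectorfeasibility} enforces exactly the primary constraints of Section~\ref{subsec:cons}. In particular $V(0,0,\dots,0)$ equals the minimal travel-part cost, and adding the fixed walking constant shows that the sequence obtained by tracing the stored optimal successors forward from the initial state is feasible and minimizes $\mathcal{C}(\cdot,R^h)$, which is the optimality claim. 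For uniqueness I would observe that the backward recursion evaluates each feasible state once and, under a fixed rule for breaking ties in the $\min$ of \eqref{eqn:recurrence}, assigns each state a single optimal successor; the forward trace from the unique initial state $(0,\dots,0)$ therefore produces one well-defined $S^{h+1}$.

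The main obstacle is justifying the additive decomposition in the presence of the secondary (departure) constraints of Definition~\ref{def:departure}. The edge costs $t(L,L')$ used by the program account only for shuttle travel, whereas the realized departure times, and hence the waiting and riding times entering $\mathcal{C}$, may be inflated when the shuttle must wait for passengers to reach a routing point. The crux is therefore to make precise the claim in the Phase-1 remarks that these constraints ``do not affect the feasibility of the sequence'': I would show that the departure-time adjustments required to satisfy Definition~\ref{def:departure} leave the comparative ranking of sequences unchanged for the purpose of \eqref{eqn:step1}, so that the travel-time-based edge costs suffice to identify the minimizer. A lesser subtlety is that the $\operatorname*{argmin}$ in \eqref{eqn:step1} need not be a singleton; I would resolve the uniqueness claim either at the level of the deterministic algorithm output, via the fixed tie-breaking above, or by invoking genericity of the continuously valued $R^h$, under which exact ties occur only on a measure-zero set.
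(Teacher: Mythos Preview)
Your proposal is correct and follows the same approach as the paper, whose own proof is a one-sentence appeal to the fact that the modified BHK algorithm examines all feasible states via backward recursion and therefore returns the optimal and uniquely determined sequence. Your treatment is considerably more careful---the explicit verification of the additive edge decomposition with multiplier $M$, the backward induction on entropy, and your flagging of the departure-constraint and tie-breaking subtleties all go well beyond what the paper's proof supplies.
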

\begin{proof}
	$\forall h \in \mathbb Z_{\geq 0}$, given $R^h$, $S^{h+1}$ is derived via the modified BHK algorithm detailed in Section \ref{sub:phase1}. The algorithm essentially examines all feasible states and seeks the sequence with minimal cost in a backward recursive manner, therefore $S^{h+1}$ is optimal and uniquely determined.  
\end{proof}
   
\subsection{Phase 2}
	\label{sub:phase2}
	
In Phase 2, the sequence of the clusters to be visited, $S$, is fixed, and the focus is on the optimization of $\mathcal C$ over the routing points, $R$, whose determination process can be formulated as a Quadratically Constrained Quadratic Programming (QCQP). The QCQP in Phase 2 is convex and can be solved to optimality via standard solvers such as  Gurobi in our implementation. It should be noted that the primary constraints need not to be examined in this phase as the sequence is given while the secondary constraints are to be accommodated. The QCQP formulation is detailed as follows.

For each cluster $C_i, \forall i \in \{0\} \cup Z_2$, its routing point should be within the corresponding area, i.e., $R_i \in A_i$. And $t_i, w_k^p, w_k^d$, $\forall i \in Z_2, k \in Z_1$ are defined as in Section \ref{sec:miqcp}. Similarly, rotational and shifted explaining variables are introduced to transform the quadratic equality representations of the time cost terms to SOC constraints as follows,
\begin{equation}
\label{eqn:qcqprev}
R_i^r=\frac{R_{S(i-1)}-R_{S(i)}}{v_s}, \quad \forall i \in Z_2
\end{equation}
\begin{equation}
\label{eqn:qcqpsevd}
R_k^{s,p}=\frac{R_{c\ell_k^p}-P_k}{v_p}, \quad R_k^{s,d}=\frac{R_{c\ell_k^d}-D_k}{v_p}, \quad \forall k \in Z_1
\end{equation}
%
%
%
%
Therefore,
\begin{equation}
\label{eqn:qcqpM}
||R_i^r||_2^2 \leq {t_i}^2, \quad \forall i \in Z_2
\end{equation}
\begin{equation}
\label{eqn:qcqpmd}
||R_k^{s,p}||_2^2 \leq {w_k^p}^2, \quad ||R_k^{s,d}||_2^2 \leq {w_k^d}^2, \quad \forall k \in Z_1
\end{equation}
where $t_i \geq 0$ and $w_k^p, w_k^d \geq 0$. In order to satisfy the departure constraints, the walking time of any passenger to be picked up should not exceed the departure time of the shuttle from the cluster that contains the corresponding pickup event, which is rewritten as
\begin{equation}
\label{eqn:qcqplazy}
w_k^p \leq \sum_{i=1}^{c\ell_k^p} t_i + S(c\ell_k^p)t_a, \quad \forall k \in Z_1
\end{equation}

Using the above notations, we rewrite (\ref{eqn:objfundef}) as
\begin{equation}
\label{eqn:qcqpobjfun}
\mathcal C = \sum_{i=1}^N \Big[ \gamma_1 + \gamma_2 \Big( \alpha_1 g_i^1 + \alpha_2 g_i^2 \Big) \Big] (t_i + t_a) + \gamma_2 \sum_{k=1}^n \Big( \alpha_3^p w_k^p + \alpha_3^d w_k^d \Big)
\end{equation}
where
\begin{equation}
\label{eqn:waitqcqp}
g_i^1=n-\sum_{j=0}^{i-1} \ell_{S(j)}^p, \quad \forall i \in Z_2
\end{equation}
is the number of passengers who have not been picked up yet therefore are waiting when the shuttle is traveling on the $i$th trip segment, and 
\begin{equation}
\label{eqn:rideqcqp}
g_i^2=\sum_{j=0}^{i-1} \ell_{S(j)}, \quad \forall i \in Z_2 
\end{equation}
is the number of passengers who are on board when the shuttle is traveling on the $i$th trip segment.

With the above definitions, the underlying problem in Phase 2 is formulated as a QCQP which minimizes $\mathcal C$ in (\ref{eqn:qcqpobjfun}), subject to constraints defined in (\ref{eqn:conmiqcp_14}) and (\ref{eqn:qcqprev})-(\ref{eqn:qcqpmd}). The uniqueness and optimality of the solution to the QCQP formulation is given by Lemma \ref{lemma:phase2}.

\begin{lemma}[Uniqueness and Optimality of Phase 2]
	\label{lemma:phase2}
	$\forall h \in \mathbb Z_{> 0}$, given $S^h$, $R^h$ is uniquely determined in (\ref{eqn:step2}) via Phase 2 and it is optimal.  
\end{lemma}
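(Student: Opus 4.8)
The plan is to prove that the Phase~2 QCQP is a convex program with an attained, unique optimum, which simultaneously shows that $R^h$ in (\ref{eqn:step2}) is well defined and globally optimal. I would organize the argument in three stages: (i) establish convexity of the program; (ii) show the SOC relaxation is tight at the optimum, so the minimized objective genuinely equals $\mathcal C(S^h,R)$; and (iii) upgrade convexity to strict convexity to obtain uniqueness.

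For convexity, I would first note that each area $A_i$ is an intersection of the disks $S_W(\cdot,\cdot)$, hence convex, so the feasible set $R_f=A_0\times A_1\times\dots\times A_N$ imposed by (\ref{eqn:conmiqcp_14}) is convex. The explaining variables $R_i^r, R_k^{s,p}, R_k^{s,d}$ in (\ref{eqn:qcqprev})--(\ref{eqn:qcqpsevd}) are affine in $R$, and the constraints (\ref{eqn:qcqpM})--(\ref{eqn:qcqpmd}) are second-order cone constraints (epigraphs of Euclidean norms, using $t_i,w_k^p,w_k^d\ge 0$) and thus convex; the departure constraint (\ref{eqn:qcqplazy}) is linear and so preserves convexity. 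Since the quantities $g_i^1,g_i^2$ in (\ref{eqn:waitqcqp})--(\ref{eqn:rideqcqp}) are constants once $S^h$ is fixed and all weights $\gamma_1,\gamma_2,\alpha_1,\alpha_2,\alpha_3^p,\alpha_3^d$ are nonnegative, the objective (\ref{eqn:qcqpobjfun}) is an affine function of $t_i,w_k^p,w_k^d$ with nonnegative coefficients. Minimizing such an objective over a convex set is a convex program, so every local optimum is global and the point returned by the solver is globally optimal.

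Next I would establish exactness of the relaxation. Because the coefficient of each $t_i$ (resp.\ $w_k^p,w_k^d$) in (\ref{eqn:qcqpobjfun}) is nonnegative and the cost is minimized, at any optimum the associated SOC constraint must be active whenever that coefficient is positive: otherwise one could decrease $t_i$ (resp.\ $w_k^p,w_k^d$) while remaining feasible --- note that decreasing $w_k^p$ keeps the upper bound (\ref{eqn:qcqplazy}) satisfied --- and strictly lower the cost. Hence $t_i=||R_i^r||_2$, $w_k^p=||R_k^{s,p}||_2$, $w_k^d=||R_k^{s,d}||_2$ at the optimum, so the minimized value of (\ref{eqn:qcqpobjfun}) equals $\mathcal C(S^h,R)$ evaluated with the true travel and walking times; this is the precise sense in which Phase~2 solves (\ref{eqn:step2}). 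Substituting these tight relations writes the cost as $f(R)=\sum_i c_i\,||R_{S(i-1)}-R_{S(i)}||_2+\sum_k\big(a_k||R_{c\ell_k^p}-P_k||_2+b_k||R_{c\ell_k^d}-D_k||_2\big)+\mathrm{const}$ with nonnegative $c_i,a_k,b_k$, a nonnegatively weighted sum of Euclidean norms of affine maps, confirming the convex reduction.

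The hard part will be uniqueness, since a weighted sum of Euclidean norms is convex but fails to be strictly convex along the radial direction of each individual norm, so convexity alone does not pin down a single minimizer. To close the gap I would aim to show $f$ is strictly convex on $R_f$: every routing point $R_i$ appears in the trip-segment terms linking it to its predecessor and successor, and, for clusters carrying events, in the walking terms anchored at the fixed points $P_k$ or $D_k$; where the anchor directions pulling on a given $R_i$ are not all collinear, the sum of the (rank-deficient) Hessians of the individual norm terms is positive definite, which yields strict convexity and hence a unique minimizer. I expect the genuine obstacle to be the degenerate configurations in which all anchors acting on some $R_i$ are collinear through it, where strict convexity can fail; I would dispatch these either by a general-position assumption on $\{P_k,D_k,O\}$, valid for generic data, or by observing that the optimal \emph{value} is always unique and that any ties among optimizers are resolved to a single canonical point by the solver, so that $R^h$ in (\ref{eqn:step2}) remains well defined.
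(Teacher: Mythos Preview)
Your approach follows the same overall line as the paper's proof: establish (strict) convexity of the Phase~2 program and conclude global optimality and uniqueness. The paper's argument is much briefer than yours: it asserts that each $A_i$ is strictly convex, hence $R_f$ is strictly convex, and that each time-cost term (an $L_2$ norm of an affine map) is strictly convex, so the nonnegatively weighted sum $\mathcal C$ is strictly convex and the minimizer is unique.

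Where you differ is in rigor, and in your favor. You correctly observe that a single Euclidean norm $\lVert\cdot\rVert_2$ is convex but \emph{not} strictly convex (it is affine along every ray through the anchor), so the paper's one-line strict-convexity claim for the individual terms is technically loose; similarly, a product of strictly convex sets is not itself strictly convex. Your three-stage decomposition --- convexity of the feasible set and objective, tightness of the SOC relaxation via the nonnegative coefficients, and then a separate argument for strict convexity of the reduced objective $f(R)$ --- is the right way to make the argument precise. Your tightness step (ii) is also a point the paper only mentions in passing in Section~\ref{sec:miqcp} and does not re-argue in the proof of the lemma.

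The one place where your proposal remains incomplete is exactly the place where the paper is silent: the degenerate collinear configurations in which the sum of rank-one Hessians fails to be positive definite. Your fallback to a general-position assumption or to a canonical tie-breaking rule is reasonable for the intended application, but you should be aware that the paper does not impose such an assumption and simply asserts strict convexity outright. In short, your proof is the same in structure as the paper's but strictly more careful; the residual gap you identify is real and is glossed over in the paper's version as well.
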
 
\begin{proof}
	$\forall h \in \mathbb Z_{> 0}$, given $S^h$, $R^h$ is derived by solving the QCQP formulation detailed in Section \ref{sub:phase2}. $\forall i \in \{0\} \cup Z_2$, $A_i$ is strictly convex, therefore $R_f$ is strictly convex as well. For each time cost term in (\ref{eqn:qcqpobjfun}), it is either an $L_2$ norm between two routing points ($t_i$) or an $L_2$ norm between one routing point and one requested location which is constant ($w_k^p$ or $w_k^d$), therefore strictly convex as well. Thus a nonnegative weighted sum of the time cost terms, i.e., $\mathcal C$, is strictly convex, as a result the QCQP formulation is strictly convex, thus $R^h$ is optimal and uniquely determined.
\end{proof}

The dimensionality of the decision variables and constraints of the QCQP formulation for Phase 2 of the AltMin algorithm are summarized in Table \ref{tab:dimensionqcqp}, where the number of variables and constraints are both linear with respect to the size of the problem, instead of quadratic dependency, due to the fact that the flow variables $x_{ij}$ are no longer in need as the sequence is fixed. Moreover, all decision variables are continuous, which further makes the QCQP formulation easy and efficient to solve.
\begin{table}[]
	\centering
	\caption{Dimensionality of the decision variables and constraints of the QCQP formulation for Phase 2.}
	\label{tab:dimensionqcqp}
	\begin{tabular}{ccr}
		\toprule
		\multicolumn{1}{c}{}        & \multicolumn{1}{c}{Category} & \multicolumn{1}{c}{Dimensionality} \\ \midrule
		\multirow{3}{*}{Variables}   & Binary                        & -                             \\
		& Continuous                    & $5N+6n+2$                        \\ \cmidrule{2-3} 
		& Sum                           & $5N+6n+2$                        \\ \midrule
		\multirow{4}{*}{Constraints}    & Quadratic                  & $2n$                                \\
		& SOC                           & $N+2n$                            \\
		& Linear                        & $2N+5n+2$                         \\ \cmidrule{2-3} 
		& Sum                           & $3N+9n+2$                          \\ 
		\bottomrule
	\end{tabular}
\end{table}

\subsection{Convergence Analysis and Stopping Criterion }
	\label{subsec:stop}

In this section, the convergence analysis of the AltMin algorithm is stated in Lemma \ref{lemma:convergence}. The stopping criterion of the mega iterations is discussed thereafter.

\begin{lemma}[Convergence of the AltMin Algorithm]
\label{lemma:convergence}
The AltMin algorithm either converges or recurs between a finite subset of the solution space.
\end{lemma}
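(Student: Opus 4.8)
The plan is to exploit two structural facts already in hand: the feasible sequence domain $S_f$ is finite, and by Lemmas~\ref{lemma:phase1} and~\ref{lemma:phase2} each phase returns a \emph{unique} minimizer. Together these reduce the AltMin iteration to a deterministic dynamical system on a finite set, from which the claimed dichotomy follows by a pigeonhole argument.

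First I would observe that composing the two phases yields a single-valued self-map on $S_f$. Given $S^h$, Lemma~\ref{lemma:phase2} produces a unique $R^h$, so the Phase-2 step is a well-defined map $\Psi\colon S_f\to R_f$ with $R^h=\Psi(S^h)$; given $R^h$, Lemma~\ref{lemma:phase1} produces a unique $S^{h+1}$. Composing these gives a deterministic map $\Phi\colon S_f\to S_f$ with $S^{h+1}=\Phi(S^h)$. The key point is that, although $R_f=A_0\times\dots\times A_N$ is a continuum, the routing-point configurations actually visited form the finite image $\Psi(S_f)$, since each is pinned down by the current $S^h\in S_f$. As $S_f$ is a subset of the $N!$ permutations of the clusters, it is finite, so the orbit $S^1,S^2,S^3,\dots$ lives in a finite set.

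Next I would apply the pigeonhole principle to this orbit: there must exist indices $h_0'<h_0$ with $S^{h_0'}=S^{h_0}$. Since $\Phi$ is deterministic, $S^{h_0'+j}=S^{h_0+j}$ for every $j\ge 0$, so $\{S^h\}$ is eventually periodic with period $p=h_0-h_0'$; and because $R^h=\Psi(S^h)$, the full iterate $(S^h,R^h)$ is eventually periodic as well. A case split on $p$ then delivers the two alternatives: if $p=1$ the iterate is eventually constant, i.e.\ the algorithm converges; if $p>1$ it cycles through the finite set $\{(S^{h_0'},R^{h_0'}),\dots,(S^{h_0-1},R^{h_0-1})\}$, i.e.\ it recurs within a finite subset of the solution space.

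I expect the main obstacle to be conceptual rather than computational: one must resist concluding from the objective alone. Each phase can only decrease $\mathcal C$, and $\mathcal C\ge 0$ is bounded below, so the cost values converge; but this says nothing about the iterates, which could plateau in cost while cycling among distinct equal-cost states. The finiteness-plus-determinism structure is precisely what upgrades convergence of the cost to the desired statement about the iterates. As a sanity check I would note that $S^{h_0'}=S^{h_0}$ forces $R^{h_0'}=R^{h_0}$ and hence equal costs at those iterates, so combined with monotonicity $\mathcal C$ is in fact constant along the recurrent set, consistent with the dichotomy.
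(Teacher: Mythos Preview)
Your proposal is correct and takes essentially the same approach as the paper: both argue that $S_f$ is finite, that Lemmas~\ref{lemma:phase1} and~\ref{lemma:phase2} make the update deterministic, and that pigeonhole therefore forces eventual periodicity with the convergence/recurrence dichotomy according to whether the period is $1$ or larger. Your write-up is somewhat more explicit in packaging the two phases as maps $\Psi$ and $\Phi$ and in noting that only the finite image $\Psi(S_f)\subset R_f$ is ever visited, and your closing remark about monotonicity forcing constant cost along the recurrent set is a correct extra observation not present in the paper.
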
 
\begin{proof}
$S_f$ is a discrete set with finite cardinality, thus $\exists h_0 \in \mathbb Z_{> 0}$, such that $S^{h_0}=S^{h_0'}$, for some $h_0' \in \mathbb Z_{\ge 0}$ and $h_0'<h_0$, and $\forall h \in Z_{\ge 0}, h<h_0$, $S^h$ are distinct. According to Lemma \ref{lemma:phase1}, $\forall h \in Z_{> 0}$, given $R^{h-1}$, $S^h$ is uniquely determined via Phase 1. Similarly, according to Lemma \ref{lemma:phase2}, given $S^h$, $R^h$ is uniquely determined via Phase 2. Therefore $\forall h \in Z_{\ge 0}, (S^{h_0+h}, R^{h_0+h})=(S^{h_0'+h}, R^{h_0'+h})$, that being said, no new solution will be produced after mega iteration $h_0$. Moreover, if $h_0-h_0'=1$, $(S^h, R^h)$ converges to $(S^{h_0'}, R^{h_0'})$; otherwise, $(S^h, R^h)$ recurs between $(S^{h_0'}, R^{h_0'})$ and $(S^{h_0-1}, R^{h_0-1})$.
\end{proof}

It follows that in our implementation, once such an $h_0$ is encountered, the algorithm should stop as the subset of the solution space that can be reached by the AltMin algorithm has all been explored. In addition, an integer upper bound $h_{\text{max}}$ is also set on the number of mega iterations in order to limit the overall computational time. As a result, the actual number of mega iterations that the AltMin algorithm will perform is $\bar{h} = \text{min} \{h_0, h_{\text{max}} \}$. Hence the output of the algorithm is $\mathcal C^\ast=\underset{h \in \{1, 2, \dots, \bar{h}\}}{\text{min}} \mathcal \mathcal C_h$, denote $h^\ast=\underset{h \in \{1, 2, \dots, \bar{h}\}}{\text{argmin}} C_h$ and $S^\ast=S^{h^\ast}, R^\ast=R^{h^\ast}$. 

The overall pseudocode of the AltMin algorithm is summarized in Algorithm \ref{alg:AltMin}.

\begin{algorithm}[t]
	\SetAlgoNoLine
	\KwIn{depot $O$, ride requests $\{P_k, D_k, r_k^p, r_k^d, T_k^r\}, \forall k \in Z_1$, clustering pattern $\{C_1, C_2, \dots, C_N\}$, hyper parameters $\gamma_1, \gamma_2, \alpha_1, \alpha_2, \alpha_3^p, \alpha_3^d, c, MPS^p, MPS^d$, and mega iterations upper bound $h_{\text{max}}$}
	\BlankLine
	\textbf{Initialization:} $\mathcal C^\ast=\infty, S^\ast=\emptyset, R^\ast=\emptyset, R_i^0 \longleftarrow \text{center of mass of }A_i, \forall i \in \{0\} \cup Z_2$ 
	\BlankLine
	\For {$h = 1 : h_{\text{max}}$} {
		$S^h \longleftarrow \underset{S\in S_f}{\text{argmin}} \, \mathcal C(S, R^{h-1})$\\
		\If {$\exists h'<h, h' \in \mathbb Z_{> 0}, \text{s.t.} \, S^h=S^{h'}$} {\textbf{break}}\
		$R^h \longleftarrow \underset{R\in R_f}{\text{argmin}} \, \mathcal C(S^h, R)$\\
		$\mathcal C^h \longleftarrow \underset{R\in R_f}{\text{min}} \, \mathcal C(S^h, R)$\\
		\If {$\mathcal C_h<\mathcal C^\ast$} {
			$\mathcal C^\ast \longleftarrow \mathcal C_h$\\
			$S^\ast \longleftarrow S_h$\\
			$R^\ast \longleftarrow R_h$ 
		}
	}
	\BlankLine
	\KwOut{$\mathcal C^\ast$, $S^\ast$, $R^\ast$}
	\caption{The AltMin Algorithm of Static Routing with Space Windows}
	\label{alg:AltMin}
\end{algorithm}

\section{Computational Experiments}
\label{sec:experiments}

The AltMin algorithm described in Section \ref{sec:altmin} and the MIQCP formulation described in Section \ref{sec:miqcp} are now numerically evaluated using real operational data. The data was collected from the shuttle operation conducted in October 2016 by Chariot \cite{fordchariot, kampe} in San Francisco. The current operation therein consists of receiving passengers' requests regarding pickup, drop-off locations and start times of the trips, based on which a static route is designed. In order to evaluate our proposed AltMin algorithm, these pickup and drop-off locations, i.e., longitude and latitude, are utilized, thereby providing a realistic demand pattern for the evaluation. 

A total number of 26 instances with various scenarios, including size of the problem $(n, N)$, clustering patterns, passenger preferences, priority guarantees, and system objectives, have been simulated. For each of the instances, both the Altmin algorithm and the MIQCP based algorithm are implemented in MATLAB R2016b, where QCQP and MIQCP are implemented in Gurobi 7.5.1 with a MATLAB interface. A 3.20 GHz Intel(R) Xeon(R) CPU E5-1660 v4 desktop with 128GB of memory is used for all computational experiments. The fixed parameters are chosen such that $v_s=30 \, \text{mph}, v_p=3.1 \, \text{mph}, t_0=1 \, \text{min}$ and $a=2.25 \, \text{mph/s}$, which reasonably reflect the actual traffic conditions in urban areas.

The weights in (\ref{eqn:objfundef}) are chosen as $\gamma_1=1$, and $\alpha_1=2, \alpha_2=1$, with the assumption that a large waiting time is less preferred than a large riding time by the passengers. The shuttle capacity is set as $c=6$. A computational upper bound of one hour is set for all experiments. $h_{\text{max}}$ in the AltMin algorithm is set to 50 as an early stopping condition. Other problem settings which are subject to change across different instances are specified in Table \ref{tab:settings}. Each of the 26 instances is designed to represent different application scenarios. For instance, all scenarios where $\gamma_2=0$ correspond to the cases where the objective of the routing task is only to minimize the total travel time of the shuttle. Scenarios where $\text{MPS}^p={MPS}^d=2$ correspond to the cases where more emphasis is placed on the queue order and the dynamic shuttle services are more in a first come first serve fashion. 

\begin{table}[]
	\centering
	\caption{Settings of the computational experiments.}
	\label{tab:settings}
	\begin{tabular}{ccrrcccc}
		\toprule
		\multirow{2}{*}{Index} & \multirow{2}{*}{Instance} & \multicolumn{2}{c}{Problem Size} & \multicolumn{4}{c}{Hyper Parameters}                                          \\
		\cmidrule(lr){3-4}
		\cmidrule(lr){5-8}
		&                           & $n$                & $N$                & $\gamma_2$ & $\alpha_3^p/\alpha_3^d$ & $r^p/r^d$ & $\text{MPS}^p/\text{MPS}^d$ \\ \midrule
		1                      & p06-c12-1                 & 6                  & 12                 & 0          & -                       & 0.3       & 6                          \\
		2                      & p06-c06-1                 & 6                  & 6                  & 0          & -                       & 0.3       & 6                          \\
		3                      & p06-c12-2                 & 6                  & 12                 & 1          & 0.1                     & 0.3       & 6                          \\
		4                      & p06-c07-1                 & 6                  & 7                  & 1          & 0.1                     & 0.3       & 6                          \\
		5                      & p06-c12-3                 & 6                  & 12                 & 1          & 0.1                     & 0.3       & 2                          \\
		6                      & p06-c07-2                 & 6                  & 7                  & 1          & 0.1                     & 0.3       & 2                          \\
		7                      & p06-c12-4                 & 6                  & 12                 & 0          & -                       & 0.15      & 6                          \\
		8                      & p06-c08-1                 & 6                  & 8                  & 0          & -                       & 0.15      & 6                          \\
		9                      & p08-c16-1                 & 8                  & 16                 & 0          & -                       & 0.3       & 6                          \\
		10                     & p08-c08-1                 & 8                  & 8                  & 0          & -                       & 0.3       & 6                          \\
		11                     & p08-c16-2                 & 8                  & 16                 & 1          & 0.1                     & 0.3       & 6                          \\
		12                     & p08-c09-1                 & 8                  & 9                  & 1          & 0.1                     & 0.3       & 6                          \\
		13                     & p08-c16-3                 & 8                  & 16                 & 1          & 0.1                     & 0.3       & 2                          \\
		14                     & p08-c11-1                 & 8                  & 11                 & 1          & 0.1                     & 0.3       & 2                          \\
		15                     & p08-c16-4                 & 8                  & 16                 & 0          & -                       & 0.15      & 6                          \\
		16                     & p08-c09-2                 & 8                  & 9                  & 0          & -                       & 0.15      & 6                          \\
		17                     & p10-c20-1                 & 10                 & 20                 & 1          & 0                       & 0.3       & 6                          \\
		18                     & p10-c20-2                 & 10                 & 20                 & 0          & -                       & 0.3       & 6                          \\
		19                     & p10-c12-1                 & 10                 & 12                 & 0          & -                       & 0.3       & 6                          \\
		20                     & p10-c20-3                 & 10                 & 20                 & 1          & 0.1                     & 0.3       & 6                          \\
		21                     & p10-c11-1                 & 10                 & 11                 & 1          & 0.1                     & 0.3       & 6                          \\
		22                     & p10-c20-4                 & 10                 & 20                 & 1          & 1                       & 0.3       & 6                          \\
		23                     & p10-c20-5                 & 10                 & 20                 & 1          & 0.1                     & 0.3       & 2                          \\
		24                     & p10-c13-1                 & 10                 & 13                 & 1          & 0.1                     & 0.3       & 2                          \\
		25                     & p10-c20-6                 & 10                 & 20                 & 0          & -                       & 0.15      & 6                          \\
		26                     & p10-c16-1                 & 10                 & 16                 & 0          & -                       & 0.15      & 6                          \\
		\bottomrule
	\end{tabular}
\end{table}

The computational results obtained for the 26 instances are summarized in Table \ref{tab:results}. Three columns are included to describe the performances of AltMin, and five for MIQCP. The three columns of AltMin correspond to $\bar{h}$, the actual number of overall iterations of Phase 1 and 2 when convergence is reached, $\mathcal C_{\text{AltMin}}^\ast$ (unit [h]) the cost of the system obtained, and the CPU time (unit [s]). And the five columns of MIQCP correspond to \textit{Vars} and \textit{Cons}, the total numbers of decision variables and constraints in the formulation, respectively, \textit{Bound} (unit [h]) the best lower bound of the cost obtained in the branch-and-cut implementation of Gurobi, $\mathcal C_{\text{MIQCP}}^\ast$ cost of the system obtained, and the CPU time. In the latter, it follows that if the best lower bound equals $\mathcal C_{\text{MIQCP}}^\ast$, then optimality is achieved. We include an additional column corresponding to \textit{Optimality Gap} to benchmark the optimality of AltMin with respect to MIQCP, defined as
\begin{equation}
	\label{eqn:optimality}
	\text{optimality gap}=\frac{\mathcal C_{\text{AltMin}}^\ast-\mathcal C_{\text{MIQCP}}^\ast}{\mathcal C_{\text{MIQCP}}^\ast}, \quad \text{if} \,\, \mathcal C_{\text{MIQCP}}^\ast \leq \mathcal C_{\text{AltMin}}^\ast
\end{equation}
If $\mathcal C_{\text{MIQCP}}^\ast > \mathcal C_{\text{AltMin}}^\ast$, it is clear that AltMin outperforms MIQCP in terms of the cost optimality. A diamond symbol $\diamond$ is included at the beginning of a row where Altmin outperforms MIQCP in terms of both optimality and computational efficiency. An asterisk symbol $\ast$ is included at the beginning of the row where the optimality gap of the AltMin algorithm is zero.

\begin{table}[]
	\centering
	\caption{Results of the computational experiments.}
	\label{tab:results}
	\begin{tabular}{rccrrrcccc}
		\toprule
		\multicolumn{1}{c}{\multirow{2}{*}{Instance}} & \multicolumn{3}{c}{AltMin}                                                                                                                                          & \multicolumn{5}{c}{MIQCP}                                                                                                                                                                                                        & \multirow{2}{*}{\begin{tabular}[c]{@{}c@{}}Optimality\\ Gap\end{tabular}} \\
		\cmidrule(llr){2-4}
		\cmidrule(lr){5-9}
		& \begin{tabular}[c]{@{}c@{}}$\bar{h}$\end{tabular} & \multicolumn{1}{c}{$\mathcal C_{\text{AltMin}}^\ast$} & \multicolumn{1}{c}{\begin{tabular}[c]{@{}c@{}}CPU\end{tabular}} & \multicolumn{1}{c}{Vars} & \multicolumn{1}{c}{Cons} & \multicolumn{1}{c}{\begin{tabular}[c]{@{}c@{}}Bound\end{tabular}} & \multicolumn{1}{c}{$\mathcal C_{\text{MIQCP}}^\ast$} & \multicolumn{1}{c}{\begin{tabular}[c]{@{}c@{}}CPU\end{tabular}} &                                                                           \\ \midrule
		$\diamond$p06-c12-1                 & 2                                                               & 0.469           & 2.409                                                                           & 706                      & 1,187                    & 0.356                                                                    & 0.469           & 3,600                                                                            & -                                                                         \\
		$\ast$p06-c06-1                 & 2                                                               & 0.361                    & 0.740                                                                           & 232                      & 389                      & 0.361                                                                   & 0.361                     & 0.283                                                                           & 0                                                                         \\
		p06-c12-2                 & 2                                                               & 3.497                     & 2.383                                                                           & 706                      & 1,187                    & 2.024                                                                    & 3.482                     & 3,500                                                                            & 0.41\%                                                                    \\
		p06-c07-1                 & 2                                                               & 2.914                     & 1.009                                                                           & 291                      & 487                      & 2.886                                                                    & 2.886                     & 0.343                                                                           & 0.96\%                                                                    \\
		$\ast$p06-c12-3                 & 2                                                               & 3.482                    & 1.856                                                                           & 706                      & 1,187                    & 3.482                                                                   & 3.482                     & 716.2                                                                           & 0                                                                         \\
		$\ast$p06-c07-2                 & 2                                                               & 3.262                    & 0.803                                                                           & 291                      & 487                      & 3.262                                                                   & 3.262                     & 0.261                                                                           & 0                                                                         \\
		$\ast$p06-c12-4                 & 2                                                               & 0.504                    & 2.355                                                                           & 706                      & 1,187                    & 0.504                                                                   & 0.504                     & 2,368                                                                            & 0                                                                         \\
		$\ast$p06-c08-1                 & 2                                                               & 0.430                    & 1.028                                                                           & 358                      & 599                      & 0.430                                                                   & 0.430                     & 1.053                                                                           & 0                                                                         \\
		$\diamond$p08-c16-1                 & 3                                                               & 0.583           & 52.01                                                                           & 1,194                    & 2,027                    & 0.047                                                                    & 0.626           & 3,600                                                                            & -                                                                         \\
		$\ast$p08-c08-1                 & 2                                                               & 0.548                    & 4.645                                                                           & 370                      & 627                      & 0.548                                                                   & 0.548                     & 1.889                                                                           & 0                                                                         \\
		$\diamond$p08-c16-2                 & 2                                                               & 5.439           & 34.13                                                                           & 1,194                    & 2,027                    & 2.052                                                                    & 5.446           & 3,600                                                                            & -                                                                         \\
		p08-c09-1                 & 2                                                               & 4.926                     & 6.228                                                                           & 445                      & 753                      & 4.538                                                                    & 4.438                     & 5.188                                                                           & 8.55\%                                                                    \\
		$\diamond$p08-c16-3                 & 2                                                               & 6.586           & 20.26                                                                           & 1,194                    & 2,027                    & 2.259                                                                    & 6.603           & 3,600                                                                            & -                                                                         \\
		$\ast$p08-c11-1                 & 2                                                               & 5.995                    & 6.689                                                                           & 619                      & 1,047                    & 5.995                                                                   & 5.995                     & 8.172                                                                           & 0                                                                         \\
		$\diamond$p08-c16-4                 & 2                                                               & 0.629           & 33.76                                                                           & 1,194                    & 2,027                    & 0.052                                                                    & 0.670           & 3,600                                                                            & -                                                                         \\
		$\ast$p08-c09-2                 & 2                                                               & 0.521                    & 6.560                                                                           & 445                      & 753                      & 0.521                                                                   & 0.521                     & 6.146                                                                           & 0                                                                         \\
		$\diamond$p10-c20-1                 & 2                                                               & 8.482           & 560.8                                                                           & 1,810                    & 3,091                    & 2.454                                                                    & 8.484           & 3,600                                                                            & -                                                                         \\
		$\diamond$p10-c20-2                 & 2                                                               & 0.767           & 579.9                                                                           & 1,810                    & 3,091                    & 0.064                                                                    & 1.090           & 3,600                                                                            & -                                                                         \\
		p10-c12-1                 & 2                                                               & 0.710                     & 120.8                                                                           & 730                      & 1,243                    & 0.699                                                                    & 0.699                     & 238.6                                                                           & 1.54\%                                                                    \\
		$\diamond$p10-c20-3                 & 2                                                               & 8.658           & 551.4                                                                           & 1,810                    & 3,091                    & 2.567                                                                    & 9.227           & 3,600                                                                            & -                                                                         \\
		$\ast$p10-c11-1                 & 2                                                               & 8.344                    & 67.66                                                                           & 631                      & 1,075                    & 8.344                                                                   & 8.344                     & 159.8                                                                           & 0                                                                         \\
		$\diamond$p10-c20-4                 & 2                                                               & 9.373           & 554.8                                                                           & 1,810                    & 3,091                    & 2.628                                                                    & 9.491           & 3,600                                                                            & -                                                                         \\
		$\diamond$p10-c20-5                 & 2                                                               & 9.205           & 273.9                                                                           & 1,810                    & 3,091                    & 2.502                                                                    & 9.278           & 3,600                                                                            & -                                                                         \\
		$\ast$p10-c13-1                 & 3                                                               & 7.895                    & 87.83                                                                           & 837                      & 1,425                    & 7.895                                                                   & 7.895                     & 65.74                                                                           & 0                                                                         \\
		$\diamond$p10-c20-6                 & 3                                                               & 0.817           & 752.2                                                                           & 1,810                    & 3,091                    & 0.052                                                                    & 1.072           & 3,600                                                                            & -                                                                         \\
		$\diamond$p10-c16-1                 & 3                                                               & 0.769           & 401.8                                                                           & 1,206                    & 2,055                    & 0.066                                                                    & 0.848           & 3,600                                                                            & -                                                                        \\
		\bottomrule
	\end{tabular}
\end{table}

It is clear from Table \ref{tab:results} that AltMin outperforms MIQCP in most of the instances, regarding both optimality and computational efficiency. More detailed observations are made below. 
\begin{itemize}
	\item AltMin is significantly faster than MIQCP. The average CPU time of all instances for AltMin is 158.8 seconds, which is more than one order smaller than that of 1,937 seconds for MIQCP. It should be noted that a one-hour computational cap is placed.
	\item Among the 26 instances, AltMin converges to optimality in 10 instances, and with better computational efficiency than MIQCP; average CPU time for AltMin is 19.81 seconds versus 290.1 seconds for MIQCP. These instances are the ones marked by an asterisk $\ast$ in Table \ref{tab:results}.
	\item In 12 out of the 26 instances marked by a diamond $\diamond$, AltMin outperforms MIQCP in terms of both optimality and computational efficiency.  
	\item In only 4 instances, MIQCP yields a better cost than AltMin, however the optimality gap is fairly small. This indicates that in some scenarios, even though AltMin converges to a suboptimal solution, it is fairly close to the optimal. The computational time remains much smaller in these cases as well when compared to MIQCP.
	\item In all of the computational experiments, AltMin displays a fast convergence, with only 2 or 3 overall iterations of Phase 1 and 2.
\end{itemize}

For different instances with the $(n, N)$, a smaller $r^p/r^d$ or $\text{MPS}^p/\text{MPS}^d$ yields faster computations and better optimality, for both AltMin and MIQCP. This is due to the fact that though the problem sizes are the same, smaller $r^p/r^d$ or $\text{MPS}^p/\text{MPS}^d$ values shrink the size the solution space (though the asymptotic running time still the same), which facilitate faster and more optimal solutions. Similarly, a smaller $c$ value shares same impacts, which is not demonstrated here.  

The computational efficiency of the AltMin algorithm can be improved even further by i) utilizing more powerful computational resources, and ii) exploiting heuristics for both Phase 1 (e.g., $k$-opt, simulated annealing, ant colony optimization) and Phase 2 (e.g., greedy algorithms).

Fig. \ref{fig:static} demonstrates the overall routes of 4 instances derived using AltMin. It should be noted that the set of requests, i.e., requested pickup and drop-off locations, are the same for the 4 instances. The reason why the actual routes realized are different is due to the differences in the corresponding problem settings shown in Table \ref{tab:settings}. 
\begin{figure}[h!]
	\centering
	\includegraphics[width=1\textwidth]{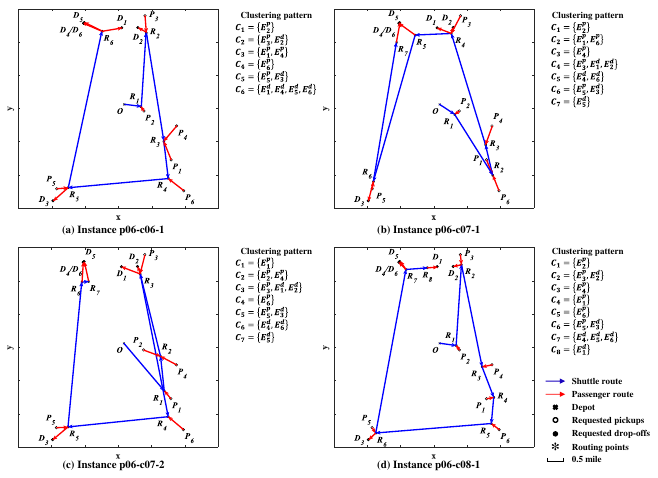}
	\caption{Overall routes of 4 instances of the same set of requests with different problem settings.}
	\label{fig:static}
\end{figure}

\section{Extension to Dynamic Routing}
\label{sec:dynamic}

In this section, the algorithm for dynamic routing is discussed using static routing as a subcomponent. Dynamic routing with space window preserves the advantages of its static counterpart, and is more flexible in terms of how to accommodate new requests and achieve better system level performance. In the dynamic routing scheme, the shuttle server searches for new requests periodically and makes suitable updates (using a modified version of the proposed static routing algorithm, detailed below) on its current route to take care of the new requests immediately as long as all constraints can be satisfied and a reduction on the cost of the system can be achieved.

The main difference between dynamic and static routing is that any update on the route can affect those passengers who have either been assigned pickups but have not been picked up yet or already been on the shuttle, such that their actual pickups/drop-offs might be delayed. We denote these passengers as \textit{existing passengers}. However, as long as the delays are below certain upper bounds specified in the initial ride offers provide by the shuttle server, the delays are considered acceptable.

Overall, one can view dynamic routing as a generalized version of its static counterpart, with a periodical update on the static route in response to real time requests from new passengers. Below, we summarize how we modify the AltMin algorithm described in Section \ref{sec:altmin} and the MIQCP formulation described in Section \ref{sec:miqcp} to accommodate this update. It should be noted that the same assumption is made that the updated clustering pattern is explicitly given beforehand.
\begin{enumerate}
	\item Modified dummy initial cluster $C_0$. \\
	In static routing, the dummy initial cluster is set as $C_0=\emptyset$. In dynamic routing, $C_0$ needs to be modified in order to accommodate the passengers who are already on board when updating the route. In particular, $C_0$ consists of the pickup events of the passengers who are on board (if any), with the corresponding $A_0$ as the shuttle's exact location when the update occurs. 
	\item Frozen routing points of existing but unexecuted pickup events. \\
	The clustering pattern, sequence of the clusters to be visited and routing points are all subject to change when any update occurs. An additional constraint in dynamic routing should be included that the routing points corresponding to the pickup events of the existing passengers who are yet to be picked up have to be frozen, and are no longer decision variables. Otherwise, additional walking times may be imposed on these passengers. Though the overall walking distance might still be within the specified maximum walking distance $r_k^p$, the \textit{double walk} may cause huge negative externalities to the riding experiences of these passengers, which is not desirable. This can be easily accomplished by setting $S_W(P_k, r_k^p)=S_W(R_{c\ell_k^p},0), \forall k$ that corresponds to an existing passenger who has not yet been picked up. It should be noted that the negotiated drop-off locations of the existing passengers are not necessarily frozen as the walks to the final destinations have not yet occurred, thus no such double walk will happen because of the updates on the current route (Fig. \ref{fig:double}).
	\begin{figure}[h!]
		\centering
		\includegraphics[width=0.95\textwidth]{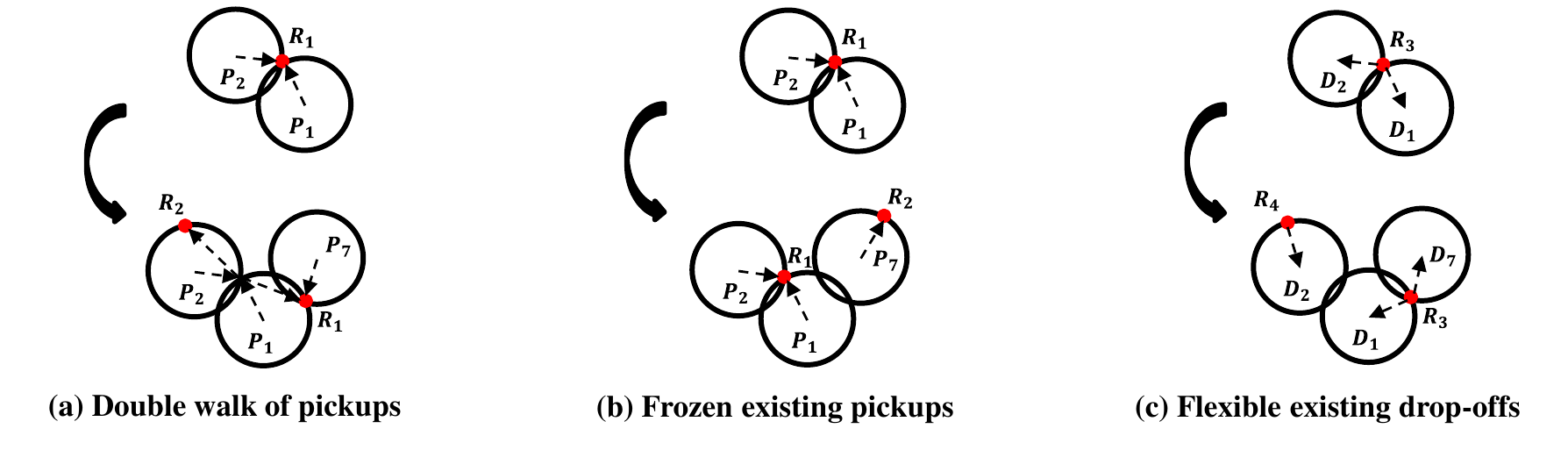}
		\caption{Demonstrations of double walks. (a): Double walks caused by updating $R_1$ which serves as the negotiated pickup locations for passenger 1 and 2, who have not been picked up yet. (b): Double walks are prevented by freezing $R_1$. (c): $R_3$ is flexible as it serves as the negotiated drop-off location for passenger 1 and 2, whose updates will not result in any double walk.}
		\label{fig:double}
	\end{figure}
	\item Consistent references of time frame and queues. \\
	Static routing only takes care of one batch of requests, while dynamic routing conducts static routing periodically, where consecutive batches of requests are accommodated. The references of time frame and the three queues associated with the passengers, i.e., $\{k\}$, $\{q_k\}$ and $\{d_k\}$ should be consistent throughout different batches. The consistency can be guaranteed by simply using the absolute references for both the time frame and the three queues. More specifically, for any passenger $k, \forall k \in \mathbb Z_{> 0}$, $k$, $p_k$ and $d_k$ are his/her absolute sequences in the entire request, pickup and drop-off queue, respectively, instead of the relative sequences in the current batch. And then the MPS constraints are able to guarantee that no passenger is likely to experience any indefinite deferment due to the updates on the routes. Similarly, the waiting and riding time of any passenger should be derived via collapsing between the absolute time stamps when he/she gets picked up and requests the shuttle ride, and between those when he/she gets dropped off and picked up, respectively.   
\end{enumerate}

Dynamic routing has been demonstrated to be advantageous over its static counterpart when the requests arrive frequently, more specifically, when the time intervals between different batches of requests are shorter than those it takes to serve all existing passengers \cite{annaswamy2018transactive}. However, static routing is easier to operate and understand, passengers are not subject to any delays on their pickups or drop-offs, and the drivers will not suffer from fatigue due to frequent updates on the shuttle routes (certainly this issue could be alleviated significantly if autonomous driving vehicles could come into play). Therefore, if the requests are not frequent compared to the duration of the existing trips, static routing suffices. However, in any scenario with a high travel demand, dynamic routing may be unavoidable and necessary.

Fig. \ref{fig:dynamic} demonstrates a numerical example of how the AltMin dynamic routing algorithm operates to serve a total number of 12 passengers whose requests are received in 2 batches, 6 requests per batch. The location data (i.e., longitude and latitude of requested pickup and drop-off locations) is based on the \textit{GoRide} dynamic shuttle service on Ford's Dearborn campus in August 2016 \cite{forddynamicshuttle}. The first batch is supposed to be finished in 24.3 minutes (derived via the AltMin static routing algorithm), while the second batch was received at $T^r=15$ minutes, when passenger 2 and 4 are on aboard and passenger 3 has not been picked up yet. Fig. \ref{fig:dynamic}(d) and (e) compare the overall routes to serve all 12 passengers using static and dynamic routing, respectively. It is clear that via dynamic routing, the shuttle makes suitable updates, e.g., $R_7$, and achieves a better system level performance.  

\begin{figure}[h!]
	\centering
	\includegraphics[width=1\textwidth]{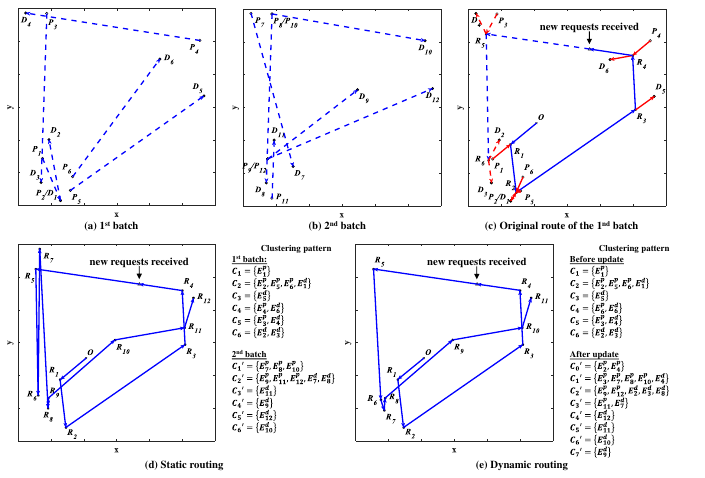}
	\caption{Demonstrations of dynamic routing via AltMin. (a), (b): The Origin-Destination (OD) pairs of the two batches of requests, respectively. (c): The original route of the first batch, dashed arrows correspond to the part which have not been carried out yet when the second batch is received; (d), (e): The overall routes to serve all 12 passengers, via static and dynamic routing, respectively. One tick corresponds to 0.5 mile.}
	\label{fig:dynamic}
\end{figure}

\section{Summary and Conclusions}
\label{sec:conclusions}

In this paper, we have proposed a novel dynamic routing framework for shared mobility services which utilizes multi-passenger shuttles, facilitated by the central concept of space window. In addition to the development of the framework, the contributions of the paper include a constrained optimization formulation that shows how the space window concept can be utilized to carry out dynamic routing of shuttles, and an AltMin algorithm composed of a modified BHK algorithm as Phase 1 and a QCQP formulation as Phase 2 to solve the constrained optimization problem. As shown in Section \ref{sec:experiments}, this AltMin algorithm is demonstrated to be an order of magnitude better in terms of computational efficiency and comparable in terms of optimality to a standard MIQCP based approach. Overall, the proposed dynamic routing framework is capable of reducing the underlying travel time costs, and enriches the scope of the emerging shared MoD services, providing an ideal combination of flexibility, convenience, and affordability for people to move around.

Future works will concentrate on the development of heuristics to solve Phase 1 of the AltMin algorithm in order to accommodate the scenarios where larger capacity shuttles are in use. An analytical and more comprehensive analysis on the optimality of the AltMin algorithm, as well as approaches such as simulated annealing so that the overall iterations of Phases 1 and 2 do not end in a local minimum, are of interest. The determination process of the clustering pattern will also be investigated systematically. The extension to a distributed optimization framework for a multi-shuttle scenario and that of dynamic pricing strategies \cite{annaswamy2018transactive} are our future directions as well.

\bibliographystyle{ACM-Reference-Format}

\bibliography{references}

\end{document}